\documentclass[a4paper]{cas-sc}

\usepackage{graphicx}
\usepackage{amsmath,amssymb,amsfonts}
\usepackage{amsthm}
\usepackage[title]{appendix}
\usepackage{mathtools}
\usepackage[numbers,sort&compress]{natbib}

\theoremstyle{plain}
\newtheorem{theorem}{Theorem}
\newtheorem{lemma}{Lemma}
\newtheorem{corollary}{Corollary}

\theoremstyle{definition}
\newtheorem{definition}{Definition}

\theoremstyle{remark}
\newtheorem{remark}{Remark}

\DeclareMathOperator{\val}{clearance}

\begin{document}
\ExplSyntaxOn
\cs_set:Npn \__first_footerline: {\mbox{}}
\ExplSyntaxOff

	\let\WriteBookmarks\relax
	\def\floatpagepagefraction{1}
	\def\textpagefraction{.001}
	
	\shorttitle{Realizability of Kaleidocycles}
	\shortauthors{H. Kociemba}
	
	\title[mode=title]{Necessary and Sufficient Criteria for the Realizability of Kaleidocycles Composed of Six Congruent Isosceles Tetrahedra}
	
	\author[1]{Herbert Kociemba}[orcid=0009-0006-1477-974X]
	\ead{kociemba@t-online.de}
	
	\affiliation[1]{
		organization={TU Darmstadt},
		city={Darmstadt},
		country={Germany}
	}
	
	\begin{abstract}
		A classical kaleidocycle is a closed cyclic chain of congruent,
		edge-hinged tetrahedra with successive hinge axes at right angles,
		capable of continuous inside-out motion. Kaleidocycles composed of six
		isosceles tetrahedra (disphenoids) form a natural generalization in
		which consecutive hinge axes need not be perpendicular. Their kinematics
		are governed by the closure equation for threefold-symmetric Bricard 6R
		linkages.
		
	We derive a new compact closure equation directly from threefold symmetry using the composition of rotations, and show its equivalence to the classical formulation. Furthermore, we establish two equivalent necessary and sufficient criteria for a kaleidocycle to admit continuous, unblocked motion in terms of the geometric parameters of the constituent disphenoid. These criteria yield sharp bounds on the admissible geometric parameters of realizable kaleidocycles.
		
		Finally, we study assemblies of eight such kaleidocycles joined
		face-to-face. We identify a configuration exhibiting full octahedral
		symmetry, which forms a polyhedral structure closely related to the
		disdyakis dodecahedron.
	\end{abstract}

	\begin{keywords}
		Kaleidocycle \sep
		Bricard linkage \sep
		Disphenoid \sep
		Closure equation \sep
		Linkage mobility
	\end{keywords}
	
	\maketitle
	

\section{Introduction}
 A classical kaleidocycle consists of six congruent tetrahedra connected edge-to-edge in a cyclic chain and admits a continuous rotating motion. Paul Schatz invented such a mechanism in 1929 (see description in~\cite{schatz1998}). This mechanism is now commonly referred to as the “Schatz cube” or “invertible cube” (Figure~\ref{fig:schatzcube}). In 1935 Stalker~\cite{Stalker35} independently developed a toy with the same principle. Public awareness of kaleidocycles began only with the work of Schattschneider and Walker~\cite{schattschneider1977} from 1977 onward. They refined Stalker’s earlier design -- also based on isosceles triangular faces -- and further enhanced it by decorating the surfaces with M. C. Escher’s periodic patterns. Walker also coined the term ``kaleidocycle'' (Greek: \textit{kalós} [beautiful] + \textit{eîdos} [form] + \textit{kýklos} [ring]).

All these models possess point group symmetry $C_{3v}$ throughout their
motion, with a threefold rotational axis and three vertical mirror
planes. Furthermore, the two opposite edges of each tetrahedron used to connect it to its two neighbors are perpendicular.

Essential aspects of the kinematics of a kaleidocycle can be revealed
when we set aside its geometric shape and concentrate on the underlying
kinematic linkage formed by rigid links (bars) and revolute joints
(hinges).
Mechanisms closely related to kaleidocycles were first studied by
Bricard in 1927~\cite{Bricard1927}. These are now known as Bricard 6R
linkages. They are of particular interest because they provide rare
examples of spatial mechanisms that, according to the
Chebychev--Gr\"ubler--Kutzbach mobility
criterion~\cite{uicker2011theory}, should be rigid, yet nevertheless
admit continuous motion.

In linkage terminology, classical kaleidocycles are closed-loop kinematic chains (or ring linkages) consisting of six links and six hinges, in which adjacent hinge axes are mutually perpendicular.  
\begin{figure}
	\centering
	\begin{minipage}[t]{0.3\textwidth}
		\includegraphics[width=\textwidth]{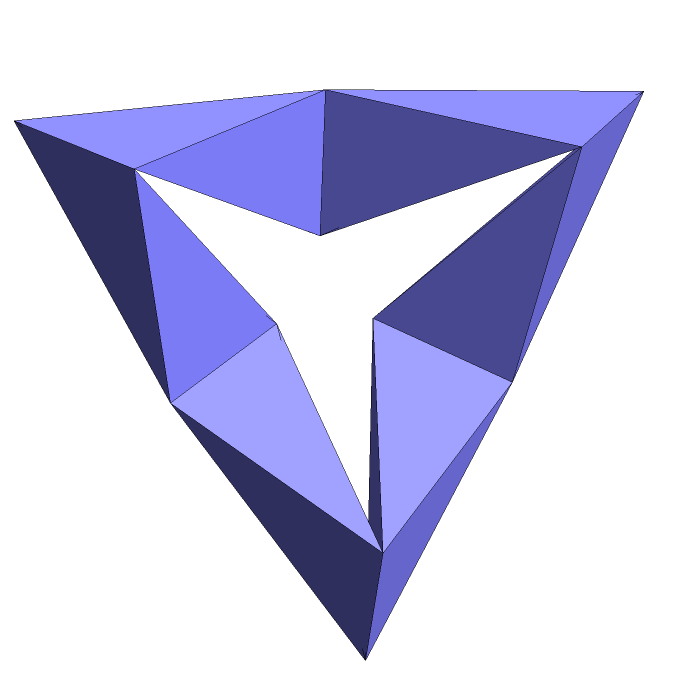}
	\end{minipage}
	\begin{minipage}[t]{0.3\textwidth}
		\includegraphics[width=\textwidth]{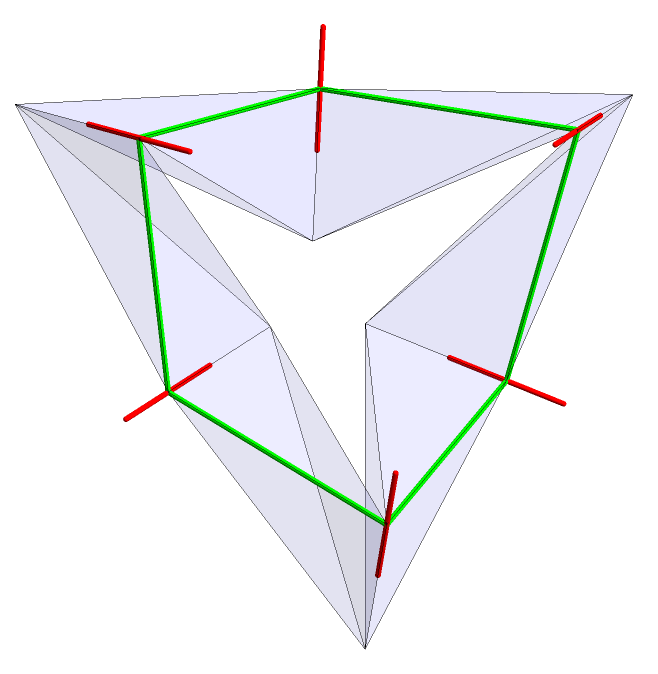}
	\end{minipage}
	\caption{The Schatz cube and its corresponding linkage. The hinges at both ends of a bar are perpendicular to each other.}
	\label{fig:schatzcube}
\end{figure}

Prior to 2003, the linkages corresponding to kaleidocycles whose adjacent
hinge axes are not perpendicular had not been investigated; this was first
addressed by Gan and Pellegrino~\cite{gan2003closed} and Chen
et~al.~\cite{chen2005threefold}. Due to their symmetries, these linkages are called \emph{threefold-symmetric Bricard 6R linkages}. Mechanisms based on this design are used in deployable structures, such as
space antennas and solar wings, as well as in architecture (see, for
example,~\cite{luo2008retractable})..

Another generalization of the classical kaleidocycle is the class of
\emph{M\"obius kaleidocycles}, introduced by Sch\"onke and
Fried~\cite{Schoenke2019}. Like the threefold-sym\-metric Bricard 6R linkages considered here, they are constructed from cyclic chains of
congruent isosceles tetrahedra, but they differ in their symmetry and
geometric structure. In this paper, we focus exclusively on the
threefold-symmetric class introduced by Gan and
Pellegrino~\cite{gan2003closed}. Further details on M\"obius
kaleidocycles can be found in
\cite{boni2025kinematic,husty2022moebius,kaji2020linkage,kaji2023explicit}.

\medskip
\textbf{Contributions.}
We call a kaleidocycle \emph{realizable} if the hinged chain of isosceles
tetrahedra can be assembled in three-dimensional space so as to form a closed
ring and admit a continuous motion without blocking. In this paper, we provide
the first necessary and sufficient conditions for the realizability of
threefold-symmetric kaleidocycles in terms of the geometric parameters of the
constituent disphenoid (Theorems~\ref{theorem:noninterference}
and~\ref{theorem:noninterference2}).

A key ingredient is a new compact closure equation for the underlying
threefold-symmetric Bricard 6R linkage,
\[
\cos\frac{\theta}{2}\cos\frac{\varphi}{2}
-\cos\alpha\,\sin\frac{\theta}{2}\sin\frac{\varphi}{2}
=\pm\frac{1}{2},
\]
derived in Theorem~\ref{theorem:phi_theta_short} from the threefold
symmetry using the composition of rotations. We show that this equation is
equivalent to the classical closure equation and use it to derive fundamental
kinematic relations describing the motion of the kaleidocycle
(Theorems~\ref{theorem:phi_theta} and~\ref{theorem:anglerange}). These
relations are in turn essential for proving the realizability conditions.

The realizability criteria also yield results concerning the existence of
kaleidocycles with special or extremal properties. After scaling the hinge
edge to length $1$, we show that the minimal possible length of a non-hinge edge of the disphenoid in a realizable kaleidocycle is a constant expressible in
radicals, approximately $1.10609$. These criteria further yield bounds on the edge angles of the isosceles
tetrahedra: the angle between the hinge edge and an adjacent edge is at least
$\arccos(1/2)=60^\circ$, while the angle opposite the
hinge edge is at most $\arccos(3/5)\approx 53^\circ$.

The only kaleidocycle attaining the extremal angle
$\arccos(3/5)$ is the classical kaleidocycle of
Walker~\cite{schattschneider1977}. This kaleidocycle also has another extremal
property: the dihedral angle at the hinge edge is $60^\circ$, whereas for all
other kaleidocycles it is strictly smaller.

Finally, motivated by physical realizations of kaleidocycles assembled from six isosceles tetrahedra, we investigate whether eight congruent kaleidocycles can be joined face-to-face to form a rigid polyhedron. We identify a tetrahedral shape that permits an exact face-to-face assembly of eight kaleidocycles into a highly symmetric polyhedral structure closely related to the \emph{disdyakis dodecahedron}. A mathematical analysis nevertheless shows that the individual kaleidocycles cannot undergo continuous motion without blocking. Despite this theoretical obstruction, physical models permit smooth and
continuous rotation, which can be attributed to small tolerances and
deformations; see Section~\ref{sec:doecahedron}.

\medskip
\textbf{Structure of this paper.}
In Section~\ref{sec:tetraformulas}, we derive several geometric properties of isosceles tetrahedra that are used throughout the paper. Section~\ref{sec:kinematicslinkage} studies the kinematics of the underlying
threefold-symmetric Bricard 6R linkage, including the derivation of a new compact closure equation and its equivalence to the classical closure equation. In Section~\ref{sec:kinematicskaleido}, we derive the fundamental kinematic relations governing the motion of the corresponding kaleidocycles. Section~\ref{sec:realizable} establishes necessary and sufficient conditions for their realizability and investigates the resulting extremal geometric properties. Finally, Section~\ref{sec:doecahedron} studies highly symmetric face-to-face assemblies of eight congruent kaleidocycles.

In the remainder of the paper, the term \emph{kaleidocycle} always refers
to the threefold-symmetric type, and all tetrahedra are assumed to be
isosceles. In several places, we use computer-assisted exact symbolic
computations, including quantifier elimination over the real numbers; the
corresponding computations are documented in the supplementary
\emph{Wolfram Mathematica} notebook.

 \section{Formulas for Isosceles Tetrahedra}\label{sec:tetraformulas}
An \emph{isosceles tetrahedron (disphenoid)} is, in general, a non-regular tetrahedron in which each pair of opposite edges has equal length. Consequently, all faces are congruent acute triangles. Leech~\cite{MR38667} summarizes this and other noteworthy properties of disphenoids.

The properties required in our context can be derived algebraically by inscribing a disphenoid with edge lengths $a$, $b$, and $c$ into a cuboid whose vertices have Cartesian coordinates $(\pm x_0,\pm y_0,\pm z_0)$, where
\begin{equation}\label{equ:xyz}
	x_0=\frac{\sqrt{-a^2+b^2+c^2}}{2\sqrt{2}}, \qquad
	y_0=\frac{\sqrt{a^2-b^2+c^2}}{2\sqrt{2}}, \qquad
	z_0=\frac{\sqrt{a^2+b^2-c^2}}{2\sqrt{2}}.
\end{equation}
Since the face triangles are acute, the quantities $x_0$, $y_0$, and $z_0$ are well defined.

\begin{figure}
	\centering
	\includegraphics[width=0.5\textwidth]{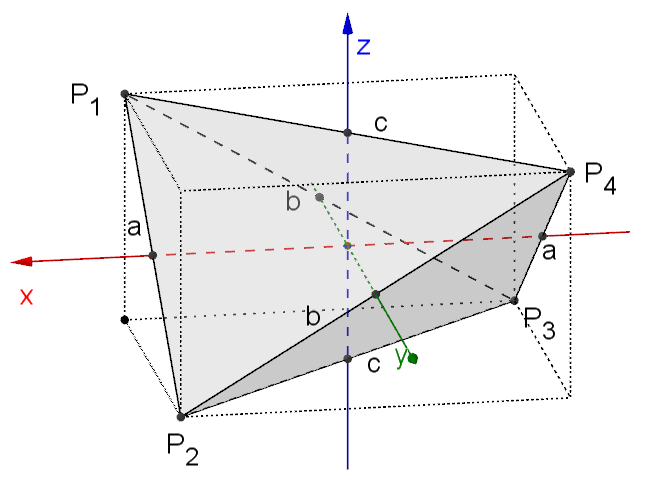}
	\caption{The edges of the isosceles tetrahedron appear as face diagonals of the cuboid.}
	\label{fig:disphenoid}
\end{figure}

For each pair of opposite edges of lengths $a$, $b$, and $c$, the line segment connecting their midpoints is perpendicular to the edges and has length $d_a=2x_0$, $d_b=2y_0$, and $d_c=2z_0$, respectively.
Explicitly,
\begin{equation}
	d_a=\sqrt{\frac{-a^2+b^2+c^2}{2}}, \quad
	d_b=\sqrt{\frac{a^2-b^2+c^2}{2}}, \quad
	d_c=\sqrt{\frac{a^2+b^2-c^2}{2}}.
\end{equation}
When analyzing the kinematics of the kaleidocycle, these line segments correspond to the links of the threefold-symmetric Bricard linkage discussed in the next section.

With the tetrahedron vertices given by
\begin{equation}\label{equ:P1P2P3P4}
	P_1(x_0,-y_0,z_0), \quad
	P_2(x_0,y_0,-z_0), \quad
	P_3(-x_0,-y_0,-z_0), \quad
	P_4(-x_0,y_0,z_0),
\end{equation}
we can determine the skew angles $\alpha_a$, $\alpha_b$, and $\alpha_c$ between opposite edges.
Using equations~\eqref{equ:xyz} and~\eqref{equ:P1P2P3P4}, we obtain, for example,
\begin{equation}\label{equ:skew_angle_a}
	\cos\alpha_a
	= \frac{\lvert \overrightarrow{P_1P_2} \cdot \overrightarrow{P_4P_3} \rvert}
	{\lVert \overrightarrow{P_1P_2} \rVert , \lVert \overrightarrow{P_4P_3} \rVert}
	= \frac{\lvert b^2-c^2\rvert}{a^2}.
\end{equation}

The skew angles are therefore given by
\begin{equation}\label{equ:skew_angles}
	\alpha_a=\arccos\frac{\lvert b^2-c^2\rvert}{a^2}, 
	\alpha_b=\arccos\frac{\lvert c^2-a^2\rvert}{b^2}, 
	\alpha_c=\arccos\frac{\lvert a^2-b^2\rvert}{c^2}.
\end{equation}
We have $ \alpha_a, \alpha_b, \alpha_c \in (0,\tfrac{\pi}{2}]$.
 
We also require the relationship between the edge lengths and the dihedral
angles of an isosceles tetrahedron. A dihedral angle is the interior angle
between two adjacent faces. Leech~\cite{MR38667} gives a compact formula for
a dihedral angle which, besides the edge lengths, involves the area of the
congruent faces of the tetrahedron. For our purposes, however, we need an
expression for the dihedral angle purely in terms of $a$, $b$, and $c$.

\begin{theorem}\label{theo:dihedral1}
	The dihedral angle $\Phi_a$ of a disphenoid with edge lengths $a$, $b$, and $c$ along the edge of length $a$ satisfies
	\begin{equation}\label{equ:dihedral}
		\cos\Phi_a
		= \frac{a}{a+b-c}
		+ \frac{a}{a-b+c}
		+ \frac{a}{a+b+c}
		+ \frac{a}{a-b-c}
		- 1.
	\end{equation}
	By cyclic permutation of $a$, $b$, and $c$, one obtains the corresponding
	expressions for $\Phi_b$ and $\Phi_c$.
\end{theorem}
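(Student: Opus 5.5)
The plan is a direct coordinate computation in the cuboid model \eqref{equ:xyz}, \eqref{equ:P1P2P3P4}, followed by an algebraic identification with the right-hand side of \eqref{equ:dihedral}.

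First, identify the edge of length $a$. We have $\overrightarrow{P_1P_2}=(0,2y_0,-2z_0)$, so $\lVert\overrightarrow{P_1P_2}\rVert^2=4(y_0^2+z_0^2)=a^2$ by \eqref{equ:xyz}. The dihedral angle $\Phi_a$ is therefore the angle between the faces $P_1P_2P_3$ and $P_1P_2P_4$ along $P_1P_2$. Let $M=(x_0,0,0)$ be the midpoint of $P_1P_2$ and $e=(0,y_0,-z_0)$ the edge direction, so that $\lVert e\rVert^2=a^2/4$. Project $P_3-M=(-2x_0,-y_0,-z_0)$ and $P_4-M=(-2x_0,y_0,z_0)$ orthogonally to $e$, obtaining vectors $w_3,w_4$. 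Then $\cos\Phi_a=\frac{w_3\cdot w_4}{\lVert w_3\rVert\,\lVert w_4\rVert}$.

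Next, compute these quantities. Put $t=z_0^2-y_0^2=(b^2-c^2)/4$. The components along $e$ are $(P_3-M)\cdot e=t$ and $(P_4-M)\cdot e=-t$. This gives
\[
w_3\cdot w_4=4x_0^2-\tfrac{a^2}{4}+\tfrac{4t^2}{a^2},\qquad
\lVert w_3\rVert^2=\lVert w_4\rVert^2=4x_0^2+\tfrac{a^2}{4}-\tfrac{4t^2}{a^2}.
\]
By the symmetry (a half-turn about the $x$-axis swaps $P_1\leftrightarrow P_2$ and $P_3\leftrightarrow P_4$), the two norms agree, so no square roots survive. Substituting $8x_0^2=b^2+c^2-a^2$ and multiplying through by $4a^2$ yields a rational expression:
\[
\cos\Phi_a=\frac{2a^2(b^2+c^2-a^2)-a^4+(b^2-c^2)^2}{2a^2(b^2+c^2-a^2)+a^4-(b^2-c^2)^2}.
\]

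Finally, simplify the right-hand side of \eqref{equ:dihedral} by pairing terms:
\[
\frac{a}{a+b-c}+\frac{a}{a-b+c}=\frac{2a^2}{a^2-(b-c)^2},\qquad
\frac{a}{a+b+c}+\frac{a}{a-b-c}=\frac{2a^2}{a^2-(b+c)^2}.
\]
Over the common denominator $\bigl(a^2-(b-c)^2\bigr)\bigl(a^2-(b+c)^2\bigr)=a^4-2a^2(b^2+c^2)+(b^2-c^2)^2$, the sum of these two fractions minus $1$ should reduce to the quotient above, up to a common sign in numerator and denominator. This is the main obstacle, though it is purely polynomial bookkeeping. I would expand both sides as polynomials in $a^2$ and $s=b^2+c^2$, $p=(b^2-c^2)^2$, and compare.

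The cases for $\Phi_b$ and $\Phi_c$ follow from the symmetric roles of $x_0,y_0,z_0$ under cyclic relabeling of the edges.
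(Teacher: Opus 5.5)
Your proposal is correct and follows essentially the paper's route. Like the paper, you compute directly in the cuboid coordinates \eqref{equ:P1P2P3P4}; you use the in-face perpendiculars to the edge where the paper uses outward face normals, and your intermediate quotient becomes the paper's \eqref{equ:cosphi_xyz} via $t^2=a^4/16-4y_0^2z_0^2$. The bookkeeping you left open does close: the right-hand side of \eqref{equ:dihedral} reduces to $\bigl(3a^4-2a^2(b^2+c^2)-(b^2-c^2)^2\bigr)/\bigl(a^4-2a^2(b^2+c^2)+(b^2-c^2)^2\bigr)$, which is your quotient with numerator and denominator both negated, so you carry out by hand the simplification the paper delegates to Mathematica.
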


\begin{proof}
	Although Equation~\eqref{equ:dihedral} could be obtained from Leech’s
	formula~\cite{MR38667} combined with Heron’s formula, we choose a more
	direct derivation, which also allows us to derive a simple alternative
	expression for $\cos\!\left(\Phi_a\right)$ in terms of the parameters
	$a$, $d_a$, and $\alpha_a$ instead of $a$, $b$, and $c$.
	
	Let $\overrightarrow{n_1}$ and $\overrightarrow{n_2}$ be outward-pointing normal
	vectors of the faces $\Delta \mathrm{P_1P_2P_3}$ and
	$\Delta \mathrm{P_1P_2P_4}$ (see Figure~\ref{fig:disphenoid}). The
	dihedral angle $\Phi_a$ at the edge $\overline{\mathrm{P_1P_2}}$ of
	length $a$ is given by
	\begin{equation*}
		\cos \Phi_a =
		-\frac{\overrightarrow{n_1}\cdot\overrightarrow{n_2}}
		{\|\overrightarrow{n_1}\|\,\|\overrightarrow{n_2}\|}.
	\end{equation*}
	
	Using the coordinates in Equation~\eqref{equ:P1P2P3P4} and simplifying
	the resulting scalar products gives
	\begin{equation}\label{equ:cosphi_xyz}
		\cos\Phi_a
		=
		\frac{x_0^2(y_0^2+z_0^2)-y_0^2z_0^2}
		{x_0^2(y_0^2+z_0^2)+y_0^2z_0^2}.
	\end{equation}
	The corresponding symbolic computation is given in the supplementary
	\emph{Mathematica} notebook.
	
	We shall also use this intermediate expression below to derive an
	alternative formula. Substituting the expressions for $x_0^2$, $y_0^2$, and $z_0^2$ from
	Equation~\eqref{equ:xyz} into Equation~\eqref{equ:cosphi_xyz} and
	simplifying yields
	\begin{equation*}
		\cos\Phi_a
		=
		-1+a\left(
		\frac{1}{a-b-c}
		+\frac{1}{a+b-c}
		+\frac{1}{a-b+c}
		+\frac{1}{a+b+c}
		\right),
	\end{equation*}
	
	which is equivalent to Equation~\eqref{equ:dihedral}. The symbolic simplification is
	documented in the supplementary \emph{Mathematica} notebook.
\end{proof}

\begin{remark}
	The structure of Equation~\eqref{equ:dihedral} becomes more transparent when it is written in the form
	\begin{equation}
		\cos\Phi_a
		=
		\sum_{\pm,\pm}\frac{a}{a\pm b\pm c}-1 ,
	\end{equation}
	where the summation runs over the four independent choices of the signs in
	\(a \pm b \pm c\).
\end{remark}

%
%

\begin{theorem}\label{theo:dihedral2}
	The dihedral angle $\Phi_a$ of a disphenoid along the edge of length $a$,
	expressed in terms of the distance $d_a$ and the skew angle $\alpha_a$
	associated with the opposite edges of length $a$, satisfies
	\begin{equation}\label{equ:dihedral2}
		\cos\Phi_a=\frac{4d_a^2-a^2\sin^2\alpha_a}{4d_a^2+a^2\sin^2\alpha_a}.
	\end{equation}
\end{theorem}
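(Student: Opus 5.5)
We start from the intermediate expression~\eqref{equ:cosphi_xyz} obtained in the proof of Theorem~\ref{theo:dihedral1},
\[
\cos\Phi_a=\frac{x_0^2(y_0^2+z_0^2)-y_0^2z_0^2}{x_0^2(y_0^2+z_0^2)+y_0^2z_0^2}.
\]
Dividing numerator and denominator by $y_0^2+z_0^2$ puts it in the form
\[
\cos\Phi_a=\frac{x_0^2-Q}{x_0^2+Q},\qquad Q=\frac{y_0^2z_0^2}{y_0^2+z_0^2}.
\]
Since $d_a=2x_0$, we have $x_0^2=d_a^2/4$. The claim therefore reduces to the single identity
\[
\frac{y_0^2z_0^2}{y_0^2+z_0^2}=\frac{a^2\sin^2\alpha_a}{16}.
\]
Once this holds, multiplying numerator and denominator by $16$ gives exactly~\eqref{equ:dihedral2}.

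To verify this identity we express both sides in $a,b,c$ using~\eqref{equ:xyz}. We have $y_0^2=(a^2-b^2+c^2)/8$ and $z_0^2=(a^2+b^2-c^2)/8$, so
\[
y_0^2+z_0^2=\frac{a^2}{4},\qquad
y_0^2z_0^2=\frac{a^4-(b^2-c^2)^2}{64}.
\]
Hence the left side equals $\bigl(a^4-(b^2-c^2)^2\bigr)/(16a^2)$.

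For the right side, \eqref{equ:skew_angles} gives $\cos^2\alpha_a=(b^2-c^2)^2/a^4$. The absolute value is harmless here because the expression is squared. Therefore
\[
a^2\sin^2\alpha_a=\frac{a^4-(b^2-c^2)^2}{a^2},
\]
and the two sides agree after dividing by $16$.

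There is also a geometric interpretation, which could serve as an alternative proof. In the cuboid picture the edge $P_1P_2$ has direction $(0,2y_0,-2z_0)$ and $P_4P_3$ has direction $(0,-2y_0,-2z_0)$, and both are perpendicular to the $x$-axis. The sine of the angle between them is $2\cdot 4y_0z_0/(4(y_0^2+z_0^2))$, and $a^2=4(y_0^2+z_0^2)$. This gives $a^2\sin^2\alpha_a=16y_0^2z_0^2/(y_0^2+z_0^2)$ directly, without passing through $b$ and $c$.

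There is no real obstacle. The only point needing care is the sign ambiguity in $\alpha_a$, since the skew angle is defined with an absolute value. This is resolved because only $\sin^2\alpha_a$ enters the formula, and $\sin^2$ is the same for an angle and its supplement.
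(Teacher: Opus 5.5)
Your proof is correct and follows essentially the same route as the paper: both start from \eqref{equ:cosphi_xyz}, use $x_0=d_a/2$ and $y_0^2+z_0^2=a^2/4$, and reduce the claim to the identity $y_0^2z_0^2=\tfrac{1}{64}a^4\sin^2\alpha_a$. The only difference is how that identity is obtained: the paper reads it off the cuboid picture via $\{y_0,z_0\}=\{\tfrac{a}{2}\sin\tfrac{\alpha_a}{2},\tfrac{a}{2}\cos\tfrac{\alpha_a}{2}\}$, whereas you verify it algebraically from $\cos^2\alpha_a=(b^2-c^2)^2/a^4$ (or via the cross product), which is more explicit and avoids the paper's case distinction between $y_0\ge z_0$ and $y_0\le z_0$.
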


\begin{proof}
	From Figure~\ref{fig:disphenoid} and Equation~\eqref{equ:xyz} we see that
	\begin{equation}\label{equ:phialt1}
		x_0=\frac{d_a}{2},\qquad y_0^2+z_0^2=\frac{a^2}{4}.
	\end{equation}
	

Projecting the right face of the cuboid onto the left face along the $x$-axis in
Figure~\ref{fig:disphenoid}, and drawing through the face center a line parallel
to the $y$-axis if $y_0\ge z_0$ or parallel to the $z$-axis if $y_0\le z_0$,
one sees that this line divides the skew angle $\alpha_a$ into two equal parts.
For $y_0\ge z_0$ we have
\begin{equation*}
	y_0=\frac{a}{2}\sin\frac{\alpha_a}{2},\qquad
	z_0=\frac{a}{2}\cos\frac{\alpha_a}{2},
\end{equation*}
and vice versa for $y_0\le z_0$. In both cases we obtain
\begin{equation}\label{equ:phialt2}
	y_0^2z_0^2
	=\frac{1}{16}a^4\sin^2\frac{\alpha_a}{2}\cos^2\frac{\alpha_a}{2}
	=\frac{1}{64}a^4\sin^2\alpha_a.
\end{equation}

	Substituting Equations~\eqref{equ:phialt1} and \eqref{equ:phialt2} into
	Equation~\eqref{equ:cosphi_xyz} immediately yields the result.
\end{proof}

In this context, we also note the following geometric property.
\begin{remark}\label{rem:bisector}
	The bisector plane of a dihedral angle along an edge of an isosceles tetrahedron intersects the opposite edge at its midpoint.
\end{remark}

\section{The Kinematics of the Linkage}\label{sec:kinematicslinkage}

A kaleidocycle consisting of six isosceles tetrahedra can be identified with a
\textit{threefold-symmetric} Bricard 6R linkage (see
Figure~\ref{fig:linkagebig}), a class of mechanisms studied in 2005 by Chen et
al.~\cite{chen2005threefold} in a different context. The kinematics of the
linkage can be described in terms of three angles $\alpha$, $\theta$, and
$\varphi$. The angle $\alpha$ denotes the twist angle between adjacent hinge
axes. Its magnitude is constant and is determined by the skew angle of the
isosceles tetrahedra. Along the cycle, however, the twist alternates between
$\alpha$ and $2\pi-\alpha$. The variables $\theta$ and $\varphi$ represent the
hinge rotation angles, which alternate sequentially at consecutive joints 
around the loop due to the threefold symmetry.

\begin{figure}
	\centering
	\includegraphics[width=0.5\textwidth]{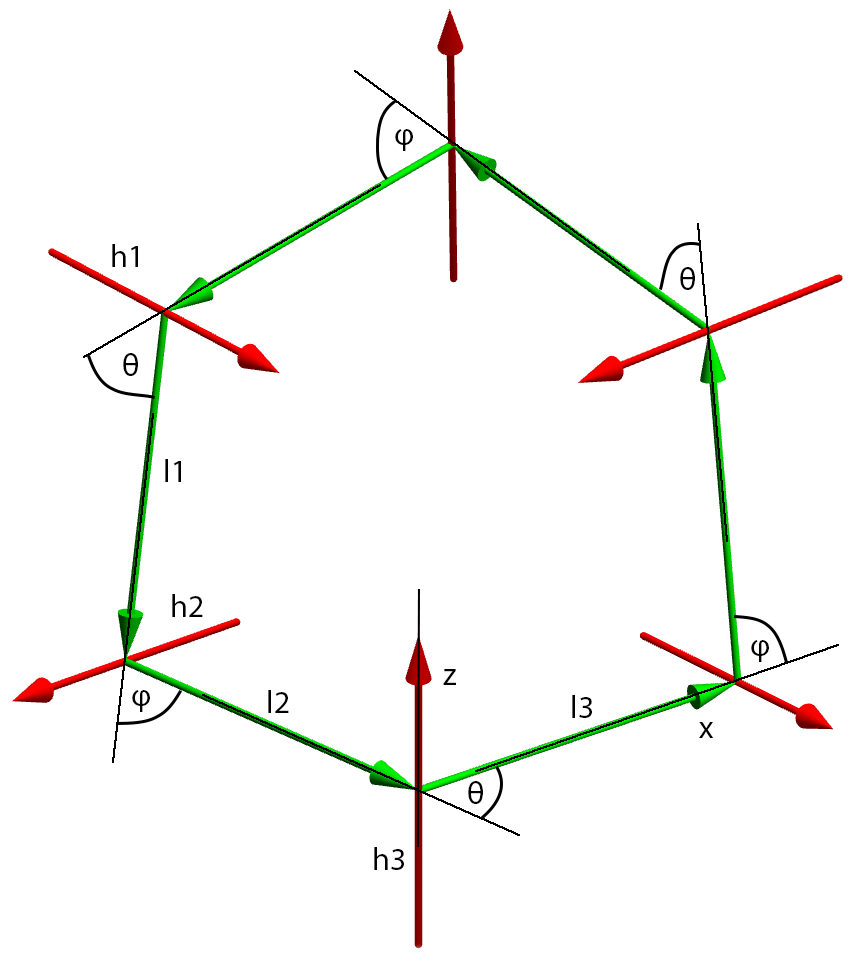}
	\caption{Threefold-symmetric Bricard 6R linkage.}
	\label{fig:linkagebig}
\end{figure}

Using the Denavit--Hartenberg formalism~\cite{Denavit1955}, Chen et al.~\cite{chen2005threefold} derived a closure equation for this linkage, which expresses the condition that the six links form a closed chain. Here, we derive an equivalent but more compact form directly from its threefold symmetry.

To obtain this compact form, we describe the threefold symmetry in terms of rotations. We use the following standard result for the composition of two rotations, which follows naturally from their quaternion representation and is stated here in the form given by Kuipers~\cite[Ch.~6.3]{kuipers1999quaternions}.

\begin{lemma}\label{lemma:rotmult}
	Let $R_\theta$ be a rotation about an axis $\vec n_\theta$ by angle $\theta$, 
	and let $R_\varphi$ be a rotation about an axis $\vec n_\varphi$ by angle $\varphi$. 
	Assume both rotations are taken about axes through the origin. 
	Let $\alpha$ denote the angle between $\vec n_\theta$ and $\vec n_\varphi$. 
	Then the rotation angle $\gamma$ of the composition $R_\varphi \circ R_\theta$ satisfies
	\begin{equation}\label{equ:rotmult}
		\cos\frac{\gamma}{2}
		=
		\cos\frac{\theta}{2}\cos\frac{\varphi}{2}
		-
		\cos\alpha\,\sin\frac{\theta}{2}\sin\frac{\varphi}{2}.
	\end{equation}
\end{lemma}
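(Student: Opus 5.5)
We prove Lemma~\ref{lemma:rotmult} with the unit-quaternion representation of rotations. Write $\vec n_\theta$ and $\vec n_\varphi$ as unit vectors. A rotation about the unit axis $\vec n$ by angle $\theta$ acts on a vector $v$, viewed as a pure quaternion, by $v\mapsto q\,v\,\bar q$, where
\[
q=\cos\tfrac{\theta}{2}+\sin\tfrac{\theta}{2}\,\vec n .
\]
We take $q_\theta$ for $R_\theta$ and $q_\varphi$ for $R_\varphi$. The composition $R_\varphi\circ R_\theta$ first applies $R_\theta$ and then $R_\varphi$, so it sends $v$ to $q_\varphi q_\theta\, v\,\overline{q_\theta}\,\overline{q_\varphi}$. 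It is therefore represented by the unit quaternion $q_\varphi q_\theta$.

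Next we expand the product with the standard rule
\[
(s_1+\vec v_1)(s_2+\vec v_2)=s_1s_2-\vec v_1\cdot\vec v_2+s_1\vec v_2+s_2\vec v_1+\vec v_1\times\vec v_2 .
\]
The scalar part of $q_\varphi q_\theta$ is
\[
\cos\tfrac{\varphi}{2}\cos\tfrac{\theta}{2}-\sin\tfrac{\varphi}{2}\sin\tfrac{\theta}{2}\,(\vec n_\varphi\cdot\vec n_\theta).
\]
Since the axes are unit vectors, $\vec n_\varphi\cdot\vec n_\theta=\cos\alpha$, so this equals the right-hand side of~\eqref{equ:rotmult}. The scalar part does not depend on the order of the factors, because the cross product appears only in the vector part. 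Hence the formula holds for either order of composition.

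Finally, any unit quaternion can be written as $\cos\frac{\gamma}{2}+\sin\frac{\gamma}{2}\,\vec m$ with $\vec m$ a unit vector. It then represents the rotation by $\gamma$ about $\vec m$, so its scalar part is $\cos\frac{\gamma}{2}$. This gives~\eqref{equ:rotmult}.

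The main point needing care is the sign ambiguity: $q$ and $-q$ give the same rotation. The rotation angle $\gamma$ is therefore fixed only up to $\gamma\mapsto 2\pi-\gamma$, which flips the sign of $\cos\frac{\gamma}{2}$. The formula holds with the representative $q_\varphi q_\theta$ built from the chosen half-angle quaternions, and so with a correspondingly signed $\gamma\in[0,2\pi)$. This ambiguity is consistent with the $\pm\frac12$ in the later closure equation. The remaining difficulty is the convention for $\alpha$: the angle between the oriented axes, as opposed to an unoriented angle in $[0,\pi/2]$. Reversing one axis negates both its angle and $\cos\alpha$, and $\sin\frac{\theta}{2}$ changes sign with the angle, so the product term is unchanged and the formula stays consistent.
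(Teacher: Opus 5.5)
Your argument is correct and follows the paper's route: the paper gives no proof of its own but cites Kuipers' quaternion derivation, which amounts to reading off the scalar part $\cos\frac{\varphi}{2}\cos\frac{\theta}{2}-\sin\frac{\varphi}{2}\sin\frac{\theta}{2}\,\vec n_\varphi\cdot\vec n_\theta$ of $q_\varphi q_\theta$, exactly as you do. Your remarks on the $\pm q$ ambiguity (which is what yields the $\pm\frac12$ and the set $\gamma\in\{\pm120^\circ,\pm240^\circ\}$ in Theorem~\ref{theorem:phi_theta_short}) and on taking $\alpha$ between oriented axes are apt; just take $\gamma\in[0,2\pi]$ rather than $[0,2\pi)$, so that the case $q_\varphi q_\theta=-1$ (e.g.\ two half-turns about the same axis) is covered.
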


Using Lemma~\ref{lemma:rotmult}, we can derive a closure equation directly, with almost no computation.
 \begin{theorem}\label{theorem:phi_theta_short}
	The closure equation for a threefold-symmetric Bricard \(6R\) linkage with twist angle $\alpha$ and hinge rotation angles $\theta$ and $\varphi$ is given by
	\begin{equation}\label{equ:closureshort}
		\cos\frac{\theta}{2}\,\cos\frac{\varphi}{2}
		-
		\cos\alpha\,\sin\frac{\theta}{2}\,\sin\frac{\varphi}{2}
		= \pm\frac{1}{2}.
	\end{equation}
\end{theorem}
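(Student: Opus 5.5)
We exploit the threefold symmetry of the linkage. The plan is to reduce the loop closure to a statement about a single rotation, to which Lemma~\ref{lemma:rotmult} applies directly. Let $T$ be the rigid motion of the threefold symmetry: the rotation by $2\pi/3$ about the symmetry axis, taking each link to the link two positions further along the loop. Walking once around the loop consists of three identical two-link segments. Each segment traverses one hinge with rotation angle $\theta$ and then one with rotation angle $\varphi$, with the twists $\alpha$ and $2\pi-\alpha$ between them. The relative orientation of a link frame with respect to the frame two links later is therefore a fixed rotation $R$, and by symmetry it equals the rotational part of $T$, up to the fixed frame conventions.

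The first concrete step is to write the orientation change over one segment as a composition of two rotations about axes through a common origin: a rotation by $\theta$ about the direction $\vec n_\theta$ of the first hinge, followed by a rotation by $\varphi$ about the direction $\vec n_\varphi$ of the next hinge. Translations are ignored because only orientations are compared. The angle between these directions is $\alpha$; the alternating twist $2\pi-\alpha$ only reverses the sense of the twist, so its cosine is still $\cos\alpha$. By Lemma~\ref{lemma:rotmult}, the composition is a rotation by an angle $\gamma$ with
\[
\cos\frac{\gamma}{2}=\cos\frac{\theta}{2}\cos\frac{\varphi}{2}-\cos\alpha\,\sin\frac{\theta}{2}\sin\frac{\varphi}{2}.
\]

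The second step is to identify $\gamma$. Three applications of $T$ give the identity, and the orientation component of $T$ is a rotation by $\pm 2\pi/3$ about the axis. We can also view the frames as relative to the moving links. The loop closes exactly when the cube of the segment rotation $R$ is the identity while $R$ itself is not the identity, since the links are not collapsed. Hence $\gamma\equiv\pm 2\pi/3 \pmod{2\pi}$. Because the quaternion of a rotation is only determined up to sign, $\gamma/2$ is determined only modulo $\pi$, so $\cos(\gamma/2)=\pm\tfrac12$. This yields \eqref{equ:closureshort}.

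The main obstacle is justifying rigorously that the segment orientation change is exactly a rotation by $2\pi/3$ whose axis is compatible with the setup. In particular we must check two things. First, hinge-fixed frames must be transported so that the composition really has the form $R_\varphi\circ R_\theta$ with axes separated by angle $\alpha$. Second, the translational part of the closure, which is automatically satisfied by the threefold symmetry, must impose no further condition. I would handle both by placing the origin on the symmetry axis and arguing that the screw motion carrying link $i$ to link $i+2$ has zero translation along the axis, so it is a pure rotation. Sufficiency then follows: given $\theta,\varphi$ satisfying the equation, building three copies of the segment by applying $T$ closes the chain.
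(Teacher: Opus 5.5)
Your necessity argument is essentially the paper's proof. The paper places $h_3$ on the $z$-axis and $l_3$ on the $x$-axis, writes the rotational part of the two-link segment as $X_\alpha\circ Z_\varphi\circ X_{2\pi-\alpha}\circ Z_\theta$, and applies Lemma~\ref{lemma:rotmult} after noting that $X_\alpha\circ Z_\varphi\circ X_{2\pi-\alpha}$ is a conjugate of $Z_\varphi$, i.e.\ a rotation by $\varphi$ about $X_\alpha e_z$, which makes angle $\alpha$ with $e_z$. That conjugation is exactly the check you postponed. It works because the two twists cancel, $\alpha+(2\pi-\alpha)\equiv0$, not because $\cos(2\pi-\alpha)=\cos\alpha$: with equal twists a leftover factor $X_{2\alpha}$ would remain, and the two-rotation lemma would not apply.

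The genuine gap is your sufficiency claim. Saying the loop closes exactly when $R^3=I$ and $R\neq I$ ignores the translational part. For the segment motion $M=(R,t)$ one has $M^3=(R^3,(I+R+R^2)t)$, and for a rotation by $\pm2\pi/3$ about a unit axis $n$, $I+R+R^2=3nn^{T}$. So closure additionally requires $t\cdot n=0$, i.e.\ zero pitch. Obtaining this from the symmetry axis, and closing the chain by applying $T$, is circular: for $(\theta,\varphi)$ satisfying \eqref{equ:closureshort} you only have an open chain, and the threefold axis and $T$ exist only once closure is established. What you need is that $M$ is a pure rotation for every $(\theta,\varphi)$, which is true and easy to check. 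Let $\tau$ be the unit translation along $x$ and $M=X_\alpha\circ\tau\circ Z_\varphi\circ X_{2\pi-\alpha}\circ\tau\circ Z_\theta$. Then $t=M(0)=(1+\cos\varphi,\,\sin\varphi\cos\alpha,\,\sin\varphi\sin\alpha)$, while the axis of $R$ points along the vector part of the quaternion product,
\[
v=\Bigl(-\sin\tfrac{\theta}{2}\sin\tfrac{\varphi}{2}\sin\alpha,\;-\cos\tfrac{\theta}{2}\sin\tfrac{\varphi}{2}\sin\alpha,\;\sin\tfrac{\theta}{2}\cos\tfrac{\varphi}{2}+\cos\tfrac{\theta}{2}\sin\tfrac{\varphi}{2}\cos\alpha\Bigr),
\]
and the half-angle identities give $t\cdot v=0$ identically. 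Geometrically: since $\tau$ commutes with $X_{\pm\alpha}$, $M$ is the map $p\mapsto Z_\theta p+2e_x$, followed by a rotation about the line through $e_x$ with direction $X_\alpha e_z$. For $\theta\neq0$ the first map is a rotation about a vertical line through a point with $x=1$, so both axes lie in the plane $x=1$; for $\sin\alpha\neq0$ they meet, and $M$ fixes their common point. For necessity alone your proof is complete, since $M^3=\mathrm{id}$ already forces zero pitch. The paper's proof also starts from a closed symmetric configuration and gives only this direction, relying on Theorem~\ref{theorem:chen_equivalence} to connect with the classical closure condition.
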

\begin{proof}
	We place the linkage in space such that link $l_3$ lies on the $x$-axis and hinge $h_3$ lies on the $z$-axis (see Figure~\ref{fig:linkagebig}). Without loss of generality, we assume all links have unit length. We denote by $L_3$ the position of $l_3$ and by $H_3$ the position of $h_3$. Our goal is to map the configuration so that $l_1$ is mapped to $L_3$ and $h_1$ is mapped to $H_3$ by rigid motions.
	
		First, we apply a rotation $Z_{\theta}$ about the $z$-axis, aligning $l_2$ with the $x$-axis. A subsequent translation by one unit in the $x$-direction moves $l_2$ to $L_3$. Next, we apply a rotation $X_{2\pi-\alpha}$\footnote{Since the twist along the cycle alternates between $\alpha$ and $2\pi - \alpha$, the initial position must be chosen accordingly.} about the $x$-axis, which fixes $l_2$ at $L_3$ and maps $h_2$ to $H_3$.
	
	In an analogous way, we map $l_1$ and $h_1$ to $L_3$ and $H_3$, respectively. We first apply a rotation $Z_{\varphi}$ about the $z$-axis, aligning $l_1$ with the $x$-axis, followed by a translation by one unit in the $x$-direction. Finally, we apply a rotation $X_{\alpha}$ about the $x$-axis, which fixes $l_1$ at $L_3$ and maps $h_1$ to $H_3$.
	
	The overall transformation is a rigid motion of the linkage. By the threefold symmetry of the structure, this rigid motion corresponds to a net global rotation $\gamma$ about the symmetry axis. 
	
	Since translations do not affect the rotational part of a rigid motion, we restrict attention to the composition of rotations. Evaluating from right to left, the rotational component of the combined transformation is
	\[
	R = X_{\alpha} \circ Z_{\varphi} \circ X_{2\pi-\alpha} \circ Z_{\theta}.
	\]
	
	Since $X_{\alpha} \circ (\cdot) \circ X_{2\pi-\alpha}$ is a conjugation by a rotation about the $x$-axis, it preserves rotation angles and sends a rotation to one whose axis is obtained by rotating the original axis about the $x$-axis by the angle $\alpha$.Therefore, the sub-composition
	\[
	R_{\varphi} = X_{\alpha} \circ Z_{\varphi} \circ X_{2\pi-\alpha}
	\]
	is a rotation by angle $\varphi$ about an axis that makes an angle $\alpha$
	with the $z$-axis.
	
	The entire rotational transformation can now be written as the composition of two rotations $R = R_{\varphi} \circ Z_{\theta}$. Applying Lemma~\ref{lemma:rotmult} directly to this composition yields the relation:
	\[
	\cos\frac{\gamma}{2} = \cos\frac{\theta}{2}\cos\frac{\varphi}{2} - \cos\alpha\sin\frac{\theta}{2}\sin\frac{\varphi}{2}.
	\]

	By the threefold symmetry of the linkage, the complete set of permissible global orientations across the symmetry group forces the net rotation angle to belong to the set $\gamma \in \{\pm 120^\circ, \pm 240^\circ\}$. Evaluating the half-angle cosine term over these valid symmetric states yields:
	\begin{align*}
		\gamma = \pm 120^\circ &\implies \cos\frac{\gamma}{2} = \cos\left(\pm 60^\circ\right) = +\frac{1}{2}, \\
		\gamma = \pm 240^\circ &\implies \cos\frac{\gamma}{2} = \cos\left(\pm 120^\circ\right) = -\frac{1}{2}.
	\end{align*}
Substituting these values for $\cos\frac{\gamma}{2}$ gives
Equation~\eqref{equ:closureshort}, completing the proof.
\end{proof}

We draw some conclusions which hold for arbitrary twist angles $\alpha$:

\begin{remark}\label{rem:plus-half}
	In the next section, we consider only the $+\tfrac{1}{2}$ case in
	Equation~\eqref{equ:closureshort}. Shifting either hinge angle by $2\pi$
	maps the $+\tfrac{1}{2}$ branch directly onto the $-\tfrac{1}{2}$ branch,
	meaning that both branches yield identical physical linkage configurations
	over a full revolution. The distinction between these two algebraic
	branches becomes relevant only at bifurcation points, which are beyond
	the scope of this paper.
\end{remark}

\begin{remark}\label{rem:symmetric}
	Equation~\eqref{equ:closureshort} is symmetric in the angles $\theta$ and $\varphi$.
\end{remark}

\begin{remark}\label{rem:negated-solution}
	If $(\theta,\varphi)$ solves Equation~\eqref{equ:closureshort}, then so does $(-\theta,-\varphi)$.
\end{remark}

\begin{remark}\label{rem:twist-symmetry}
	The relative positions of the links during the motion are the same for twist
	angles $\alpha$ and $\pi-\alpha$; the two linkages therefore exhibit identical
	kinematic behavior.
\end{remark}

This is because the directions of the hinge axes in Figure~\ref{fig:linkagebig}
are arbitrary. For example, if the directions of all three axes
associated with the angle $\theta$ are reversed, the twist angle of the linkage
alternates between $\pi+\alpha$ and $\pi-\alpha$ instead of $\alpha$ and
$-\alpha$, while $\theta$ is replaced by $2\pi-\theta$ (or equivalently
$-\theta$). This transformation does not affect the kinematics of the linkage.
More formally, Equation~\eqref{equ:closureshort} is invariant under the
substitution $\alpha \mapsto \pi-\alpha$, $\theta \mapsto -\theta$.
Figures~\ref{fig:alphagraphs}(c) and~\ref{fig:alphagraphs}(e) show an example.

Next, we demonstrate that our compact closure equation is mathematically equivalent to the classical kinematic formulation established by Chen et al.~\cite{chen2005threefold}.

\begin{theorem}\label{theorem:chen_equivalence}
	The compact half-angle closure equation \eqref{equ:closureshort} is mathematically equivalent to Chen's classical full-angle formulation.
\end{theorem}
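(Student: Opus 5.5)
The plan is to show that the union of the two sign branches of Equation~\eqref{equ:closureshort} is exactly the solution set of Chen's full-angle equation. The natural tool is squaring, because the full-angle equation does not distinguish the two branches. Write $c=\cos\alpha$ and
\[
A(\theta,\varphi)=\cos\tfrac{\theta}{2}\cos\tfrac{\varphi}{2}-c\,\sin\tfrac{\theta}{2}\sin\tfrac{\varphi}{2}.
\]
Then Equation~\eqref{equ:closureshort} holds with one of the two signs if and only if $A^2=\tfrac14$. So the statement reduces to showing that $4A^2-1=0$ is, up to a nonzero factor and the conventions discussed below, Chen's equation.

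\textbf{Step 1: expand $4A^2$ into full angles.} I will use the identities
\[
\cos^2\tfrac{\theta}{2}=\tfrac{1+\cos\theta}{2},\qquad \sin^2\tfrac{\theta}{2}=\tfrac{1-\cos\theta}{2},\qquad 2\sin\tfrac{\theta}{2}\cos\tfrac{\theta}{2}=\sin\theta,
\]
and likewise for $\varphi$. This gives
\[
4A^2-1=(1+\cos\theta)(1+\cos\varphi)+\cos^2\alpha\,(1-\cos\theta)(1-\cos\varphi)-2\cos\alpha\sin\theta\sin\varphi-1 .
\]
Every half-angle has now disappeared, and the expression is a trigonometric polynomial in $\theta,\varphi$ of the same type as Chen's closure equation.

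\textbf{Step 2: collect terms and compare with Chen's equation.} I will collect by monomials, using $1+\cos^2\alpha=2-\sin^2\alpha$ and $1-\cos^2\alpha=\sin^2\alpha$. This yields
\[
\sin^2\alpha\,\cos\theta\cos\varphi-\sin^2\alpha+(1-\cos^2\alpha)(\cos\theta+\cos\varphi)+2\cos^2\alpha\cos\theta\cos\varphi\ldots
\]
The exact grouping should be driven by the target form. Chen et al.\ write their equation as a linear combination of $\cos\theta\cos\varphi$, $\cos\theta+\cos\varphi$, $\sin\theta\sin\varphi$ and a constant, with coefficients in $\cos\alpha$. After multiplying by a nonzero constant, I will match these four coefficients one by one.

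Before comparing coefficients, I must align conventions. Chen's Denavit--Hartenberg joint variables may differ from the present $\theta,\varphi$ by a sign or by an offset of $\pi$, and Chen's twist may be $\alpha$ or $\pi-\alpha$. By Remarks~\ref{rem:negated-solution} and~\ref{rem:twist-symmetry}, such substitutions leave the solution set invariant up to the corresponding relabeling, so a suitable dictionary between the two parameterizations can be fixed first.

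\textbf{Step 3: prove the converse direction.} Suppose $(\theta,\varphi)$ satisfies Chen's equation. Reversing the algebra of Step 2 gives $A^2=\tfrac14$, hence $A=\pm\tfrac12$, so the pair lies on one of the branches of Equation~\eqref{equ:closureshort}. Conversely, by Remark~\ref{rem:plus-half}, shifting a hinge angle by $2\pi$ swaps the two branches without changing the physical configuration. Therefore both branches describe the same configuration set as Chen's equation, which establishes the equivalence.

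\textbf{Main obstacle.} The algebra itself is routine. The delicate part is Step 2's bookkeeping: reconciling Chen's Denavit--Hartenberg angle and twist conventions with the present ones, so that the coefficients match exactly rather than only up to a hidden sign change. My first attempt will be to test the candidate dictionary on the classical case $\alpha=\pi/2$ and on the symmetric points $\theta=\varphi$. In both cases the two equations can be solved explicitly, and the solutions must agree.
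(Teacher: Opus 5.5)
Your route is the paper's own proof: square to merge the two branches, pass to full angles with the double-angle identities, regroup, and reverse. Your Step~1 expression is exactly the paper's Equation~\eqref{equ:double_angle_applied}. The step that fails as written is your regrouping in Step~2. After expanding Step~1, the constant terms are $1+\cos^2\alpha-1=\cos^2\alpha$, not $-\sin^2\alpha$ (an extra $-1$ has crept in). In addition, the term $-2\cos\alpha\sin\theta\sin\varphi$ has vanished behind the dots. Done correctly, one gets
\[
\cos^2\alpha+\sin^2\alpha\,(\cos\theta+\cos\varphi)+(1+\cos^2\alpha)\cos\theta\cos\varphi-2\cos\alpha\sin\theta\sin\varphi=0.
\]
This is the form the paper identifies as Chen's equation, in the same variables $\theta,\varphi,\alpha$ and without any rescaling.

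This also removes your ``main obstacle''. With your constant term, no convention dictionary could produce a match. The reparametrizations you list (sign changes of the angles, a simultaneous shift of both by $\pi$, $\alpha\mapsto\pi-\alpha$), and any overall rescaling, all preserve the ratio of the constant term to the coefficient of $\cos\theta\cos\varphi$. That ratio is $\cos^2\alpha/(1+\cos^2\alpha)$ in Chen's equation but $-\sin^2\alpha/(1+\cos^2\alpha)$ in your Step~2. Your planned test at $\alpha=\pi/2$ would expose this: the correct reduction is $(1+\cos\theta)(1+\cos\varphi)=1$, while your Step~2 gives $(1+\cos\theta)(1+\cos\varphi)=2$. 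So the care needed is in the arithmetic, not in reconciling conventions. Finally, the converse needs nothing beyond $A^2=\tfrac14\iff A=\pm\tfrac12$. The $2\pi$-shift of Remark~\ref{rem:plus-half} concerns the physical meaning of the two branches and is not part of the algebraic equivalence.
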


\begin{proof}
	We can verify this equivalence by expanding our compact form to recover Chen's original equation. We start with the half-angle expression:
	\begin{equation*}
		\cos\frac{\theta}{2}\,\cos\frac{\varphi}{2}
		-
		\cos\alpha\,\sin\frac{\theta}{2}\,\sin\frac{\varphi}{2}
		= \pm\frac{1}{2}.
	\end{equation*}
	
	Squaring both sides eliminates the sign ambiguity and yields:
	\begin{equation*}
		\cos^2\frac{\theta}{2}\cos^2\frac{\varphi}{2} 
		+ \cos^2\alpha\sin^2\frac{\theta}{2}\sin^2\frac{\varphi}{2} 
		- 2\cos\alpha\cos\frac{\theta}{2}\cos\frac{\varphi}{2}\sin\frac{\theta}{2}\sin\frac{\varphi}{2} = \frac{1}{4}.
	\end{equation*}
	
	Multiplying the entire equation by $4$ allows us to directly apply standard double-angle relations ($2\sin\tfrac{x}{2}\cos\tfrac{x}{2} = \sin x$, $2\cos^2\tfrac{x}{2} = 1+\cos x$, and $2\sin^2\tfrac{x}{2} = 1-\cos x$):
	\begin{equation}\label{equ:double_angle_applied}
		(1+\cos\theta)(1+\cos\varphi) + \cos^2\alpha(1-\cos\theta)(1-\cos\varphi) - 2\cos\alpha\sin\theta\sin\varphi = 1.
	\end{equation}
	
	Multiplying out the products in \eqref{equ:double_angle_applied} completely yields:
	\begin{equation*}
		(1 + \cos\theta + \cos\varphi + \cos\theta\cos\varphi) + (\cos^2\alpha - \cos^2\alpha\cos\theta - \cos^2\alpha\cos\varphi + \cos^2\alpha\cos\theta\cos\varphi) - 2\cos\alpha\sin\theta\sin\varphi = 1.
	\end{equation*}
	
	Subtracting $1$ from both sides eliminates the constant $1$ on the far left and the far right, leaving $\cos^2\alpha$ as the sole constant term. Grouping the remaining terms by their trigonometric factors gives:
	\begin{equation*}
		\cos^2\alpha + (1 - \cos^2\alpha)\cos\theta + (1 - \cos^2\alpha)\cos\varphi + (1 + \cos^2\alpha)\cos\theta\cos\varphi - 2\cos\alpha\sin\theta\sin\varphi = 0.
	\end{equation*}
	
    Factoring out $(\cos\theta + \cos\varphi)$ from the linear terms and substituting the identity $1 - \cos^2\alpha = \sin^2\alpha$, we arrive precisely at Chen's original full-angle closure equation:
\[
\cos^2\alpha
+ \sin^2\alpha\,(\cos\theta + \cos\varphi)
+ (1+\cos^2\alpha)\,\cos\theta\,\cos\varphi
- 2\cos\alpha\,\sin\theta\,\sin\varphi = 0.
\]
Conversely, since this expansion consists entirely of reversible trigonometric substitutions, the steps can be followed backward to factor Chen's equation into a perfect square, recovering exactly our $\pm 1/2$ branches. This establishes full logical equivalence and completes the proof.
\end{proof}

This shows that both equations describe the exact same kinematics. However, our new equation separates the joint variables into simple half-angle products, making it far more straightforward to isolate individual angles analytically.

\section{The Kinematics of the Kaleidocycle}\label{sec:kinematicskaleido}
Using isosceles tetrahedra as links introduces additional geometric constraints when a continuous rotational motion of the kaleidocycle is required. The hinge rotation angles $\theta$ and $\varphi$ are naturally restricted to the interval $(-\pi,\pi)$. 
For a given twist angle $\alpha$ --- which now coincides with the skew
angle of the tetrahedra as defined in~\eqref{equ:skew_angles} --- equation~\eqref{equ:closureshort} allows us to compute, for each value
of $\theta$, the admissible values of $\varphi$.
\begin{theorem}\label{theorem:phi_theta}
	For a kaleidocycle with twist angle $\alpha$, the hinge angle $\varphi$ is explicitly expressed
	in terms of the hinge angle $\theta$ by
	\begin{equation}\label{equ:phi}
		\varphi_\alpha(\theta)
		=
		4\arctan\!\left(
		\frac{\pm\sqrt{3-4\sin^2\!\alpha\,\sin^2\!\tfrac{\theta}{2}}
			-2\cos\alpha\,\sin\tfrac{\theta}{2}}
		{2\cos\tfrac{\theta}{2}+1}
		\right).
	\end{equation}
\end{theorem}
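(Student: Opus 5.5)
The plan is to solve the compact closure equation~\eqref{equ:closureshort} of Theorem~\ref{theorem:phi_theta_short} for $\varphi$ by a Weierstrass (tangent half-angle) substitution. By Remark~\ref{rem:plus-half} only the $+\tfrac12$ branch is needed, so the starting point is
\[
\cos\tfrac{\theta}{2}\cos\tfrac{\varphi}{2}-\cos\alpha\,\sin\tfrac{\theta}{2}\sin\tfrac{\varphi}{2}=\tfrac12 .
\]
Write $C=\cos\tfrac{\theta}{2}$ and $S=\sin\tfrac{\theta}{2}$, and put $t=\tan\tfrac{\varphi}{4}$. This is legitimate because $\varphi\in(-\pi,\pi)$ gives $\tfrac{\varphi}{4}\in(-\tfrac{\pi}{4},\tfrac{\pi}{4})$, so $t$ is finite and $\varphi=4\arctan t$ recovers $\varphi$ uniquely. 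Then
\[
\cos\tfrac{\varphi}{2}=\frac{1-t^2}{1+t^2},\qquad \sin\tfrac{\varphi}{2}=\frac{2t}{1+t^2}.
\]

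Substituting these and multiplying by $2(1+t^2)$ gives the quadratic
\[
(2C+1)\,t^2+4\cos\alpha\,S\,t+(1-2C)=0 .
\]
Since $\theta\in(-\pi,\pi)$ we have $C>0$, so the leading coefficient $2C+1$ is positive and the equation is a genuine quadratic. The quadratic formula then gives
\[
t=\frac{-2\cos\alpha\,S\pm\sqrt{4\cos^2\alpha\,S^2-(1-4C^2)}}{2C+1}.
\]
Next simplify the discriminant using $C^2=1-S^2$:
\[
4\cos^2\alpha\,S^2+4-4S^2-1=3-4\sin^2\alpha\,S^2 .
\]
Applying $\varphi=4\arctan t$ yields exactly~\eqref{equ:phi}.

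It remains to add a short remark on the range of validity. Real solutions exist precisely when $3-4\sin^2\alpha\sin^2\tfrac{\theta}{2}\ge 0$, which defines the admissible $\theta$-interval; this is presumably the content of Theorem~\ref{theorem:anglerange}. The two signs of the square root correspond to the two values of $\varphi$ for a given $\theta$. A valid configuration additionally needs the resulting $\varphi$ to lie in $(-\pi,\pi)$, which is equivalent to $|t|<1$.

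The main obstacle is bookkeeping rather than computation. One must justify that $\varphi\in(-\pi,\pi)$ makes the substitution bijective, and check that no solutions of the $+\tfrac12$ branch are lost. Solutions with $\varphi$ outside $(-\pi,\pi)$ correspond, via the $2\pi$-shift of Remark~\ref{rem:plus-half}, to the $-\tfrac12$ branch and are excluded for kaleidocycles anyway. The algebra itself is routine.
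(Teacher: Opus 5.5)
Your proposal is correct and follows the paper's proof step for step. You use the same substitution $t=\tan\frac{\varphi}{4}$ on the $+\frac12$ branch, arrive at the same quadratic $(2\cos\frac{\theta}{2}+1)t^2+4\cos\alpha\sin\frac{\theta}{2}\,t+1-2\cos\frac{\theta}{2}=0$, and simplify the discriminant to $3-4\sin^2\alpha\sin^2\frac{\theta}{2}$ in the same way. Your added checks, that $2\cos\frac{\theta}{2}+1>0$ for $\theta\in(-\pi,\pi)$ and that $\varphi\in(-\pi,\pi)$ corresponds exactly to $|t|<1$, make explicit what the paper leaves implicit.
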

\begin{proof}
	We use the well-known
	tangent half-angle substitution to derive an explicit expression for
	$\varphi_\alpha(\theta)$. With
	\begin{equation}\label{equ:t-subst}
		t=\tan\!\left(\frac{\varphi}{4}\right), \qquad -\pi<\varphi<\pi,
	\end{equation}
	we have $-1<t<1$, and
	\begin{equation}\label{equ:halfangle}
		\cos\!\left(\frac{\varphi}{2}\right)=\frac{1-t^2}{1+t^2}, \qquad
		\sin\!\left(\frac{\varphi}{2}\right)=\frac{2t}{1+t^2}.
	\end{equation}
	
	Substituting~\eqref{equ:halfangle} into
	equation~\eqref{equ:closureshort} with $+\tfrac{1}{2}$ on the right-hand
	side (see Remark~\ref{rem:plus-half}), eliminating the common denominator
	$1+t^2$, and simplifying, we obtain
	\[
	t^{2}\bigl(1+2\cos\tfrac{\theta}{2}\bigr)
	+4t\cos\alpha\sin\tfrac{\theta}{2}
	-2\cos\tfrac{\theta}{2}+1
	=0.
	\]
	Solving this quadratic equation for $t$ yields
	\begin{equation}\label{equ:t-solve}
		t=\frac{\pm\sqrt{3-4\sin^2\!\alpha\,\sin^2\!\tfrac{\theta}{2}}
			-2\cos\alpha\,\sin\tfrac{\theta}{2}}
		{2\cos\tfrac{\theta}{2}+1}.
	\end{equation}
	Re-substituting~\eqref{equ:t-subst} into~\eqref{equ:t-solve} finally gives Equation~\eqref{equ:phi}.
\end{proof}
Safsten et al.~\cite{safsten2016} present an alternative formulation of $\varphi_\alpha(\theta)$ whose analytical structure is more involved\footnotemark.

\footnotetext{\label{fn:altphi}Safsten et al.~\cite{safsten2016} give the following alternative explicit expression for the hinge angle $\varphi$::
	\[
	\begin{aligned}
		\varphi_\alpha(\theta)
		={}&
		\pm \arccos\!
		\left(
		-\frac{\cos^2\alpha\,(1-\cos\theta)+\cos\theta}
		{\cos^2\alpha\,(1-\cos\theta)+(1+\cos\theta)}
		\right) \\
		&\quad
		+ \arg\!\left[
		\left(-\cos^2\alpha\,(1-\cos\theta)+(1+\cos\theta)\right)
		- i\left(2\cos\alpha\,\sin\theta\right)
		\right].
	\end{aligned}
	\]
}

We decompose $\varphi_\alpha(\theta)$ into the functions
$\varphi_\alpha^{+}(\theta)$ and $\varphi_\alpha^{-}(\theta)$ and plot
the corresponding graphs for different values of $\alpha$ (see Figure~\ref{fig:alphagraphs}).

\begin{figure}
	\centering
	\setlength{\tabcolsep}{2pt}
	\begin{tabular}{@{}ccccc@{}}
		\includegraphics[width=0.19\textwidth]{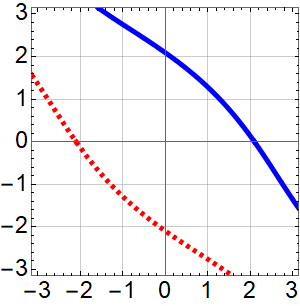} &
		\includegraphics[width=0.19\textwidth]{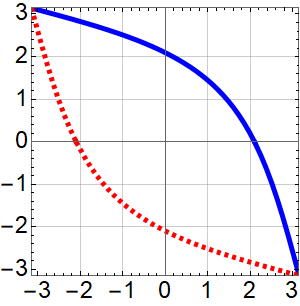} &
		\includegraphics[width=0.19\textwidth]{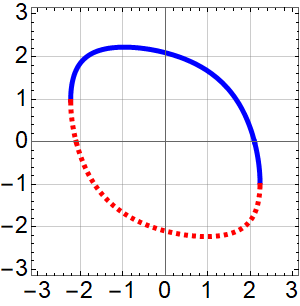} &
		\includegraphics[width=0.19\textwidth]{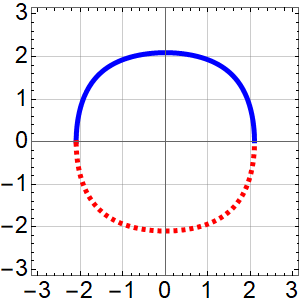} &
		\includegraphics[width=0.19\textwidth]{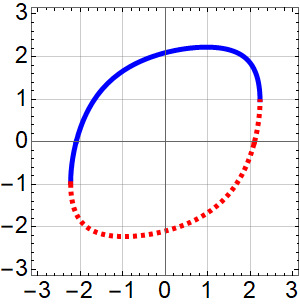} \\
		
		(a) $\alpha=\pi/4$ &
		(b) $\alpha=\pi/3$ &
		(c) $\alpha=5\pi/12$ &
		(d) $\alpha=\pi/2$ &
		(e) $\alpha=7\pi/12$
	\end{tabular}

\caption{Relation between the hinge rotation angles $\theta$ and $\varphi$,
	shown on the horizontal and vertical axes, respectively, in
	$[-\pi,\pi]\times[-\pi,\pi]$. Solid line: $\varphi_\alpha^+(\theta)$;
	dashed line: $\varphi_\alpha^-(\theta)$.
	(a) $\alpha=\pi/4$: no closed path.
	(b) $\alpha=\pi/3$: degenerate closed path.
	(c)--(e) Closed paths for $\alpha=5\pi/12$, $\pi/2$, and $7\pi/12$,
	respectively. Figures~(c) and~(e) illustrate Remark~\ref{rem:twist-symmetry}.}

	\label{fig:alphagraphs}
\end{figure}

A continuous periodic rotational motion is only possible if 
$\varphi^+_\alpha(\theta)$ and $\varphi^-_\alpha(\theta)$ together form a closed loop. 
In the context of linkage kinematics, this one-parameter curve is called a \emph{compatibility path}, 
and moving along this path corresponds to the continuous motion.  

\begin{lemma}\label{lemma:closedpath}
	The compatibility path of a kaleidocycle is closed if and only if the twist angle $\alpha$ satisfies
	\begin{equation}\label{equ:alpharange}
		\frac{\pi}{3} < \alpha < \frac{2\pi}{3}.
	\end{equation}
\end{lemma}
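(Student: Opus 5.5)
The plan is to read the lemma off the explicit parametrization of Theorem~\ref{theorem:phi_theta}. I interpret ``closed compatibility path'' as follows: the two branches $\varphi^{\pm}_\alpha(\theta)$ join into a single closed curve that lies entirely in the open square $(-\pi,\pi)^2$ of admissible hinge angles. The whole argument is controlled by the radicand
\[
D_\alpha(\theta)=3-4\sin^2\alpha\,\sin^2\tfrac{\theta}{2}
\]
of Equation~\eqref{equ:phi}. On $(-\pi,\pi)$ we have $\cos\tfrac{\theta}{2}>0$, so the denominator $2\cos\tfrac{\theta}{2}+1$ is positive. Consequently both branches are real and continuous exactly where $D_\alpha(\theta)\ge 0$, and they coincide exactly where $D_\alpha(\theta)=0$.

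\emph{Sufficiency.} Assume $\pi/3<\alpha<2\pi/3$, i.e.\ $\sin\alpha>\sqrt3/2$. Define $\theta_{\max}\in(0,\pi)$ by $\sin\tfrac{\theta_{\max}}{2}=\sqrt3/(2\sin\alpha)<1$.
\begin{itemize}
\item For $|\theta|<\theta_{\max}$ we have $D_\alpha>0$, so $\varphi^+_\alpha\neq\varphi^-_\alpha$.
\item At $\theta=\pm\theta_{\max}$ we have $D_\alpha=0$, so the two branches meet.
\item For $|\theta|>\theta_{\max}$ there is no real solution.
\end{itemize}
Hence the union of the graphs of $\varphi^{\pm}_\alpha$ over $[-\theta_{\max},\theta_{\max}]$ is a closed curve, provided it stays inside the open square. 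The $\theta$-coordinate does so because $\theta_{\max}<\pi$. For the $\varphi$-coordinate I use the symmetry of Equation~\eqref{equ:closureshort} in $\theta$ and $\varphi$ (Remark~\ref{rem:symmetric}). Solving for $\theta$ in terms of $\varphi$ gives the same constraint, so every point of the path satisfies $|\varphi|\le\theta_{\max}<\pi$. Finally, $t=\tan(\varphi/4)$ then lies in $(-1,1)$, consistent with the substitution~\eqref{equ:t-subst}.

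\emph{Necessity.} Assume $\sin\alpha\le\sqrt3/2$. Then $D_\alpha(\theta)\ge 3-4\cdot\tfrac34\sin^2\tfrac{\theta}{2}\ge 0$ on all of $(-\pi,\pi)$. So the two branches are defined over the whole $\theta$-interval and can meet only where $D_\alpha=0$, which can happen only as $|\theta|\to\pi$ and only when $\sin\alpha=\sqrt3/2$. They therefore cannot close up inside the open square, and the solution set runs out to the edge $\theta=\pm\pi$.

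To see this concretely, set $\theta=\pi$ in \eqref{equ:closureshort}. The equation reduces to $-\cos\alpha\sin\tfrac{\varphi}{2}=\tfrac12$, which has a solution with $|\varphi|\le\pi$ because $|\cos\alpha|\ge\tfrac12$. The curve thus reaches the boundary, where the hinge angle $\pm\pi$ is physically excluded. On the torus it would wrap around rather than bound a loop inside the square, which is the situation of Figure~\ref{fig:alphagraphs}(a).

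The boundary case $\alpha=\pi/3$ (and, by Remark~\ref{rem:twist-symmetry}, $\alpha=2\pi/3$) needs a separate check. Here $D_\alpha=3\cos^2\tfrac{\theta}{2}$, so the square root becomes rational in $\theta$ and the two branches simplify to smooth curves. Their only common points occur at $\theta=\pm\pi$, on the boundary of the square. This is the degenerate ``closed path'' of Figure~\ref{fig:alphagraphs}(b): it closes only through the excluded corner configurations, so it does not count as a closed compatibility path.

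I expect the main obstacle to be making necessity rigorous rather than pictorial. One must show that for $\sin\alpha\le\sqrt3/2$ no sub-arc of the solution set forms a closed loop strictly inside $(-\pi,\pi)^2$. My first approach is the graph structure: over $(-\pi,\pi)$ each branch is a single-valued continuous function of $\theta$, and the two branches never coincide in the interior. Any closed loop would have to turn back in $\theta$ somewhere. For a union of two disjoint graphs over the same open interval, a turning point is exactly a point where the branches meet, and no such interior point exists.
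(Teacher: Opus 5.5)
Your proposal is correct and follows the paper's proof. Both reduce closedness to the radicand $3-4\sin^2\alpha\,\sin^2\frac{\theta}{2}$ of Equation~\eqref{equ:phi} vanishing at some interior $\theta_0\in(-\pi,\pi)$, which happens exactly when $\sin^2\alpha>\frac34$; your $\theta_{\max}$ is the paper's $\theta_0$, and your necessity argument and treatment of $\alpha=\pi/3$ spell out what the paper leaves implicit.

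One step needs repair. The symmetric discriminant condition gives only $\sin^2\frac{\varphi}{2}\le\sin^2\frac{\theta_{\max}}{2}$. On the range $(-2\pi,2\pi)$ of $4\arctan$ this also allows $|\varphi|\ge 2\pi-\theta_{\max}$, so it does not yet give $|\varphi|\le\theta_{\max}$. Verify $|t|<1$ directly instead. The quadratic in $t$ from the proof of Theorem~\ref{theorem:phi_theta} has positive leading coefficient $1+2\cos\frac{\theta}{2}$. Its values at $t=\pm1$ are $2\pm4\cos\alpha\sin\frac{\theta}{2}>0$, because $|\cos\alpha|<\frac12$. Its vertex lies in $(-1,1)$, so both roots lie in $(-1,1)$.
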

\begin{proof}
	This can be derived directly from Equation~\eqref{equ:phi}. For admissible
	values of $\theta$, the two solution branches satisfy
	$\varphi^-_\alpha(\theta) \leq \varphi^+_\alpha(\theta)$, with equality
	occurring exactly where the radicand vanishes. Thus, a closed compatibility
	path exists exactly when there is some $\theta_0 \in (-\pi,\pi)$ such that
	\begin{equation*}
		3 - 4 \sin^2\!\alpha \,\sin^2\!\frac{\theta_0}{2} = 0.
	\end{equation*}
	Using the identity $\sin^2\!\frac{\theta}{2} = \tfrac{1}{2}(1-\cos\theta)$,
	we obtain
	\begin{equation}\label{equ:maxtheta}
		\cos\theta_0 = \frac{1}{2}\left(2 - \frac{3}{\sin^2\alpha}\right).
	\end{equation}
	Since $\cos\theta_0 > -1$, we require $\sin^2\alpha > \tfrac{3}{4}$,
	which for $0 < \alpha < \pi$ is equivalent to
	Inequality~\eqref{equ:alpharange}.
\end{proof}

\begin{remark}
Figure~\ref{fig:alphagraphs}(b) also shows a closed path for $\alpha = \pi/3$,
but this degenerate case leads to hinge rotation angles ranging over the full
interval $[-\pi,\pi]$, which cannot occur in a kaleidocycle, since the
dihedral angle of a tetrahedron at the hinge edge is strictly positive.
\end{remark}

 From Equation~\eqref{equ:phi} we also can derive an inequality which is essential to determine a sufficient condition for a kaleidocycle to be realizable.
\begin{theorem}\label{theorem:anglerange}
For a kaleidocycle with twist angle $\alpha$ in the range given in
Lemma~\ref{lemma:closedpath}, the hinge angles $\theta$ and $\varphi$
satisfy
\begin{equation}\label{equ:thetainterval}
	\theta,\varphi\in
	\left[
	-\arccos\left(
	\frac{\cos^2\alpha+\frac{1}{2}}{\cos^2\alpha-1}
	\right),
	\arccos\left(
	\frac{\cos^2\alpha+\frac{1}{2}}{\cos^2\alpha-1}
	\right)
	\right].
\end{equation}
\end{theorem}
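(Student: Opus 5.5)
The plan is to read the bound off the discriminant of the quadratic in $t=\tan(\varphi/4)$ obtained in the proof of Theorem~\ref{theorem:phi_theta}, and then to transfer it to $\varphi$ using the symmetry of the closure equation.

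First I rewrite the claimed endpoint. With $\cos^2\alpha-1=-\sin^2\alpha$,
\[
\frac{\cos^2\alpha+\frac12}{\cos^2\alpha-1}
=-\frac{\frac32-\sin^2\alpha}{\sin^2\alpha}
=1-\frac{3}{2\sin^2\alpha}.
\]
This is exactly the value of $\cos\theta_0$ in Equation~\eqref{equ:maxtheta}. For $\pi/3<\alpha<2\pi/3$ we have $\sin^2\alpha\in(3/4,1]$, so this quantity lies in $(-1,-\tfrac12]$. Hence $\theta_0:=\arccos\bigl(1-\tfrac{3}{2\sin^2\alpha}\bigr)$ is a well-defined angle in $[2\pi/3,\pi)$, and the claimed interval is $[-\theta_0,\theta_0]$.

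Next I treat $\theta$. Fix a configuration $(\theta,\varphi)\in(-\pi,\pi)^2$ on the $+\tfrac12$ branch, as in Remark~\ref{rem:plus-half}. Then $t=\tan(\varphi/4)$ is a real root of
\[
t^{2}\bigl(1+2\cos\tfrac{\theta}{2}\bigr)+4t\cos\alpha\sin\tfrac{\theta}{2}-2\cos\tfrac{\theta}{2}+1=0.
\]
Since $|\theta|<\pi$, we have $\cos\tfrac{\theta}{2}>0$, so the leading coefficient is positive and this really is a quadratic. A real root therefore forces the discriminant to be nonnegative. Up to the factor $4$, the discriminant equals $3-4\sin^2\alpha\sin^2\tfrac{\theta}{2}$; this is the routine expansion already implicit in Equation~\eqref{equ:t-solve}. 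Using $\sin^2\tfrac{\theta}{2}=\tfrac12(1-\cos\theta)$, the condition $3-4\sin^2\alpha\sin^2\tfrac{\theta}{2}\ge 0$ becomes $\cos\theta\ge 1-\tfrac{3}{2\sin^2\alpha}=\cos\theta_0$. Because $\cos$ is even and strictly decreasing on $[0,\pi]$, this is equivalent to $|\theta|\le\theta_0$.

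For $\varphi$, I invoke Remark~\ref{rem:symmetric}: Equation~\eqref{equ:closureshort} is symmetric in $\theta$ and $\varphi$. Repeating the argument with the roles exchanged, using the substitution $\tan(\theta/4)$, gives $|\varphi|\le\theta_0$.

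To show the interval is sharp, I check that $\theta=\pm\theta_0$ actually occurs. This is where the radicand vanishes, which is precisely where the branches $\varphi^\pm_\alpha$ meet in the proof of Lemma~\ref{lemma:closedpath}. Sharpness is not required for the stated inclusion, but it confirms that the endpoints are the extreme values of the motion.

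I expect the main obstacle to be the bookkeeping rather than any single estimate. One must confirm that the expression in the statement coincides with Equation~\eqref{equ:maxtheta}. One must also justify that restricting to the $+\tfrac12$ branch loses nothing. For the latter, I note that the $-\tfrac12$ branch gives a quadratic with the same discriminant, up to the sign conventions, because shifting one angle by $2\pi$ interchanges the branches (Remark~\ref{rem:plus-half}). So the bound holds on both branches.
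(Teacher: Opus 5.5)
Your proposal is correct and is essentially the paper's argument: you rewrite nonnegativity of the radicand $3-4\sin^2\alpha\,\sin^2\tfrac{\theta}{2}$ from Theorem~\ref{theorem:phi_theta} as $\cos\theta\ge\cos\theta_0$ with $\cos\theta_0$ from Equation~\eqref{equ:maxtheta}, and you use the $\theta\leftrightarrow\varphi$ symmetry to transfer the bound to $\varphi$; you only make explicit the leading-coefficient and monotonicity details that the paper phrases as tangency of the vertical line $\theta=\theta_0$. For the $-\tfrac12$ branch, a cleaner justification than the $2\pi$-shift (which leaves $(-\pi,\pi)$) is that this branch is empty on $(-\pi,\pi)^2$, since there $\cos\tfrac{\theta}{2}\cos\tfrac{\varphi}{2}>0$ while $\lvert\cos\alpha\,\sin\tfrac{\theta}{2}\sin\tfrac{\varphi}{2}\rvert<\tfrac12$ because $\lvert\cos\alpha\rvert<\tfrac12$.
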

\begin{proof}
	For $\alpha$ in the range of Lemma~\ref{lemma:closedpath}, a vertical line
	$\theta=\theta_v$ intersects the compatibility path in zero, one, or two
	points, depending on whether the radicand in equation~\eqref{equ:phi} is
	negative, vanishes, or positive for the chosen value of $\theta_v$.
	The value $\cos\theta_0$ given in equation~\eqref{equ:maxtheta}
	determines precisely the values of $\theta$ for which the radicand vanishes, which implies that the
	vertical line $\theta=\theta_0$ is tangent to the compatibility path.
	Therefore, $\theta_0$ determines the minimal and maximal values of $\theta$
	attained during the cyclic motion.
	
	Because of the symmetry between the rotation angles $\theta$ and $\varphi$,
	equation~\eqref{equ:maxtheta} also holds for $\cos\varphi_0$. Replacing $\sin^2\alpha$ by $1-\cos^2\alpha$ yields Inequality~\eqref{equ:thetainterval}.
\end{proof}
Safsten et al.~\cite{safsten2016} obtain the same result in a different
way using their alternative formulation (see the corresponding footnote on page~\pageref{fn:altphi})  of
Equation~\eqref{equ:phi}.

\section{Realizable Kaleidocycles}\label{sec:realizable}
We begin by introducing the following definitions.

A \emph{configuration} of a kaleidocycle specifies the positions and orientations of all tetrahedra such that the hinge constraints are satisfied; it represents a particular state of the kaleidocycle during its motion.

A configuration of a kaleidocycle is called \emph{blocking} in a given direction of motion if any further motion in that direction would cause the interiors of two tetrahedra to intersect.

As noted in the introduction, a kaleidocycle is called \emph{realizable} if the hinged chain of isosceles
tetrahedra can be assembled in space to form a closed ring that admits
continuous motion without blocking (see Figure~\ref{fig:configurations}).

Two tetrahedra are said to be in
\emph{face contact} if they share a common triangular face without gaps or overlap.

A configuration is called \emph{extremal} if the hinge angles attain
their minimal or maximal values as specified in
Theorem~\ref{theorem:anglerange}.

\begin{figure}
	\centering
	\begin{tabular}{ccc}
		\includegraphics[width=0.30\textwidth]{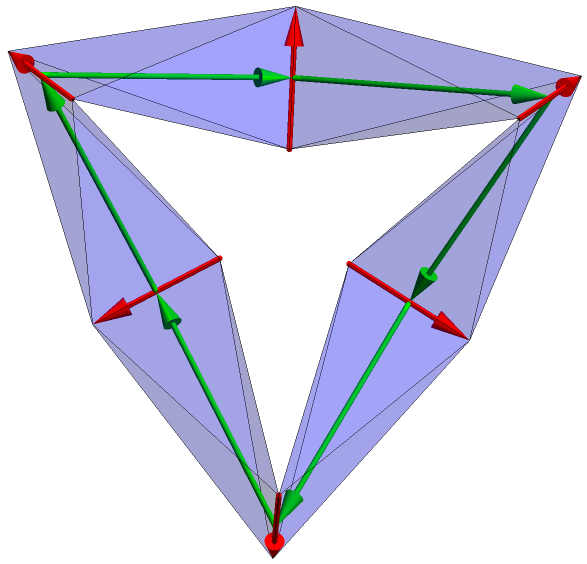} &
		\includegraphics[width=0.30\textwidth]{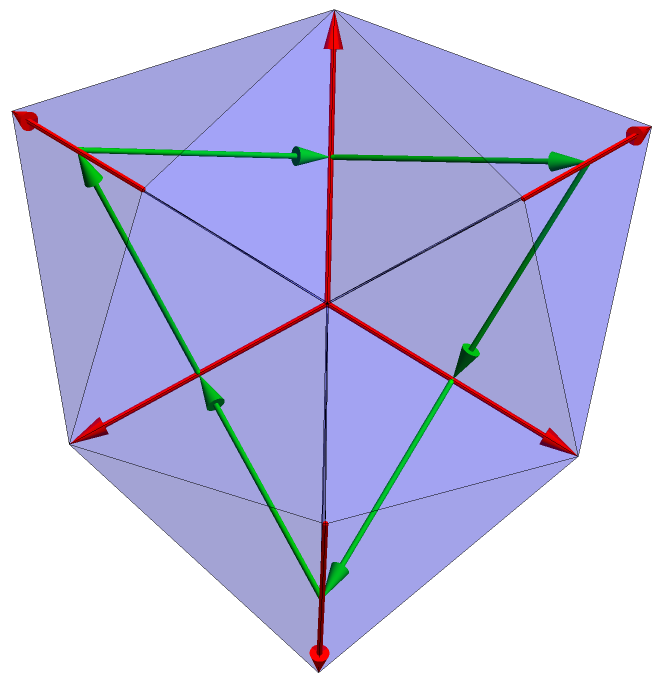} &
		\includegraphics[width=0.30\textwidth]{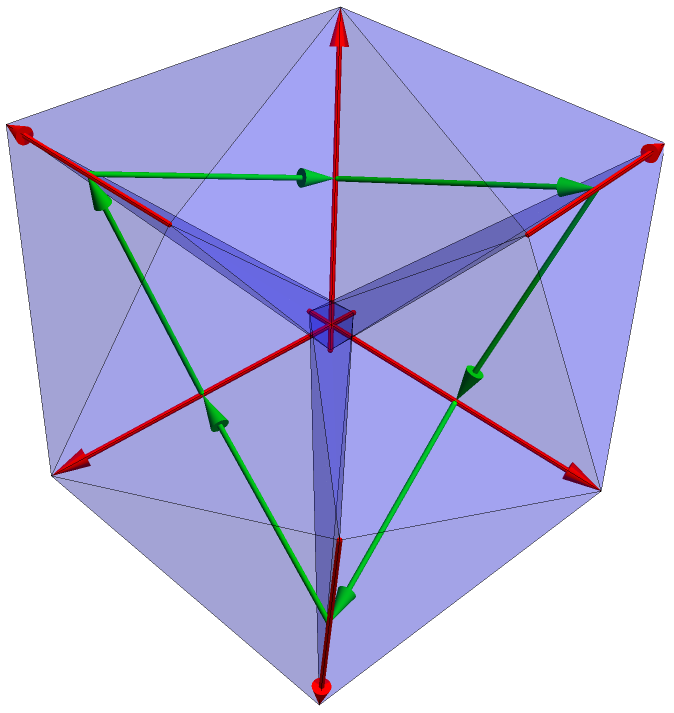} \\[2pt]
		(a) realizable & (b) realizable & (c) not realizable
	\end{tabular}
	
	\caption{Kaleidocycles at the extremal hinge angles.
		(a), (b) Realizable cases.
		(b) Adjacent tetrahedra are in face contact but do not intersect.
		(c) A non-realizable case: the corresponding position is unreachable
		because the tetrahedra would intersect.}
	\label{fig:configurations}
\end{figure}

Inequality~\eqref{equ:alpharange} provides only a necessary condition for
the realizability of a kaleidocycle composed of isosceles tetrahedra,
since the dihedral angles between adjacent tetrahedra at the hinges
impose additional restrictions on the admissible configurations.
Even if Inequality~\eqref{equ:alpharange} is satisfied, the motion may
still become blocked if the available angular range does not allow the
hinge rotation angles $\theta$ and $\varphi$ to attain the extremal
values prescribed by Inequality~\eqref{equ:thetainterval}.
In the following, we derive a necessary and sufficient condition in terms
of the geometric dimensions of the tetrahedra. This derivation reduces
the blocking analysis to pairs of adjacent tetrahedra. To justify this
reduction, we first show that non-adjacent tetrahedra cannot cause
blocking during the motion.

\begin{lemma}\label{lemma:nonadjacent}
	During the motion of a threefold-symmetric kaleidocycle, blocking can
	occur only through contact between adjacent tetrahedra.
\end{lemma}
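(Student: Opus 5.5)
We prove the statement by a separation argument: every tetrahedron stays inside a region that meets the regions of its non-adjacent tetrahedra at most on the common symmetry axis. Label the tetrahedra $T_1,\dots,T_6$ cyclically. By threefold symmetry, $T_{i+2}$ is the image of $T_i$ under rotation by $120^\circ$ about the symmetry axis $A$. The non-adjacent pairs are therefore $(T_i,T_{i+2})$, related by this rotation, and the opposite pairs $(T_i,T_{i+3})$. We assume the throughout-motion symmetry group $C_{3v}$ of the introduction, so the three vertical mirror planes are available.

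\textbf{Step 1 (wedge decomposition).} The three mirror planes through $A$, together with the three planes obtained from them by rotating $60^\circ$ about $A$, cut space into six dihedral wedges of opening $60^\circ$ around $A$. We want to show that each $T_i$ lies in one wedge $W_i$, or in the union of two consecutive wedges. It suffices that each $T_i$ is symmetric with respect to the mirror plane through the midpoint of its link (the bisector plane). Remark~\ref{rem:bisector} supports this: the bisector plane at a hinge edge passes through the midpoint of the opposite edge. Hinge $h_i$ is then shared by $T_i$ and $T_{i+1}$ and crosses a boundary plane, so $T_i$ lies in the $120^\circ$ sector between the symmetry planes through the centres of $T_{i-1}$ and $T_{i+1}$. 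To make this precise, we would show using the coordinates \eqref{equ:P1P2P3P4} that every vertex of $T_i$ has angular coordinate about $A$ within $\pm 60^\circ$ of the centre angle of $T_i$. Since the sector is convex when its opening is at most $180^\circ$, the whole tetrahedron then lies in it.

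\textbf{Step 2 (disjointness).} The sectors of $T_i$ and $T_{i+2}$ have centres $120^\circ$ apart and half-openings $60^\circ$, so they meet only along a boundary half-plane. Two closed convex sets lying in opposite closed half-spaces cannot have intersecting interiors. The same holds for $T_i$ and $T_{i+3}$, whose centres are $180^\circ$ apart. Hence interior overlap, and therefore blocking, can only arise between adjacent tetrahedra.

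\textbf{Main obstacle.} The angular bound in Step 1 is the hard part. Along the motion, the vertices of $T_i$ not on its hinges could in principle swing past the bounding half-planes. Both hinge edges of $T_i$ meet the bounding planes at the hinge lines, but the hinge lines themselves are not radial.

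What I would try first is to project onto the plane orthogonal to $A$. I would then argue that the bounding half-plane between $T_i$ and $T_{i+1}$ is exactly the mirror plane of $C_{3v}$ that swaps these two tetrahedra. Since $T_i$ and $T_{i+1}$ are mirror images in that plane, any point of $T_i$ beyond the plane would have its mirror image in $T_{i+1}$ on $T_i$'s side. The two interiors would then already intersect, which is precisely adjacent-pair blocking. So either the wedge property holds, or an adjacent pair is already in collision, and the statement follows in both cases.

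If the mirror-image argument does not close rigorously, the fallback is a computer-assisted check. Parametrize the motion by $\theta$ over the interval of Theorem~\ref{theorem:anglerange}, obtain $\varphi$ from Theorem~\ref{theorem:phi_theta}, compute the vertex angular coordinates symbolically, and verify the sign conditions by quantifier elimination, as is done elsewhere in the paper.
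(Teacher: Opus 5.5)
Your Step 1 rests on a wrong picture of the $C_{3v}$ symmetry, and the step you yourself call the hard part is never proved. No mirror of the kaleidocycle maps a tetrahedron to itself. A disphenoid with three distinct edge lengths is chiral, which is why the ring consists of three disphenoids and their three mirror images. The mirror $\sigma_i$ exchanging $T_i$ and $T_{i+1}$ contains their common hinge $h_i$; it does not cross it. So the six symmetry half-planes carry the six hinge edges, and these half-planes are the sector boundaries, not planes ``through the centres'' of tetrahedra. (The three mirrors already give six half-planes $60^\circ$ apart, so rotating them by $60^\circ$ adds nothing.)

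Your worry that vertices of $T_i$ not on its hinges might swing out is also a misconception. The two hinges of $T_i$ are opposite edges and contain all four vertices. As soon as $h_{i-1}$ and $h_i$ stay in their consecutive half-planes, convexity places $T_i$ in the closed $60^\circ$ sector between them, and your Step~2 goes through. The body-frame coordinates \eqref{equ:P1P2P3P4} cannot certify angular positions about the axis during the motion.

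What must actually be shown is that a hinge cannot leave its half-plane. The paper does this as follows. A hinge reaching the axis brings three alternate hinges there simultaneously. Then the two tetrahedra adjacent along one of the remaining hinges share three vertices, i.e.\ they are in face contact, which is blocking between adjacent tetrahedra.

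Your mirror-image fallback points to a valid alternative, but the inference as written fails. A point of $T_{i+1}$ lying on ``$T_i$'s side'' need not lie in $T_i$, and that side is undefined without a continuity argument. A correct version runs as follows:
\begin{itemize}
\item If the mirror plane $\Pi_i\supset h_i$ met $\mathrm{int}\,T_i$ in a point $q$, then $q=\sigma_i(q)\in\mathrm{int}\,T_{i+1}$, an overlap of adjacent tetrahedra.
\item Hence the endpoints of $h_{i-1}$ never lie strictly on opposite sides of $\Pi_i$. They cannot both lie in $\Pi_i$, since $T_i$ would be flat.
\item By continuity, $T_i$ therefore stays in a fixed closed half-space of $\Pi_i$, and likewise of $\Pi_{i-1}$.
\end{itemize}
Starting from the standard ring assembly, this confines $T_i$ to its $60^\circ$ wedge throughout the motion. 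It also confines $h_i$ to a fixed half-plane, so the paper's axis-crossing argument is not even needed. None of these steps is in your proposal, however, and the computer-assisted fallback is too unspecified to substitute for them.
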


\begin{proof}
	The idea of the proof is to use the reflection symmetries to confine
	each tetrahedron to a fixed sector around the threefold rotational axis.
	We first show that the hinge edges cannot cross the rotational axis:
	reaching the axis would lead to face contact between adjacent tetrahedra
	and hence to blocking. Convexity then implies that each tetrahedron
	remains within its corresponding sector, which excludes blocking between
	non-adjacent tetrahedra.
	
	The three reflection-symmetry planes intersect in the threefold rotational
	axis. Each of these planes is divided by the rotational axis into two
	half-planes, giving six symmetry half-planes arranged cyclically around
	the axis. Consecutive half-planes bound sectors of angle \(60^\circ\).
	
	By reflection symmetry, each hinge edge lies in one of the three symmetry
	planes throughout the motion. In order to pass from one half-plane of
	its symmetry plane to the opposite half-plane, a hinge edge would first
	have to reach the threefold rotational axis. By threefold symmetry, three
	symmetry-related hinge edges reach the axis simultaneously.
	
	Consider now a pair of adjacent tetrahedra joined by one of the other
	three hinge edges. They share the two endpoints of this hinge, while, at
	this position, their respective third vertices coincide on the threefold
	rotational axis. Hence the corresponding triangular faces coincide, and
	the tetrahedra are in face contact. Any further motion in that direction
	would cause their interiors to intersect. Therefore a hinge edge cannot
	pass from one symmetry half-plane to the opposite one during the motion.
	
	Each tetrahedron has two consecutive hinge edges as a pair of opposite
	edges. These two edges contain all four vertices of the tetrahedron.
	Since a tetrahedron is convex, it is therefore contained entirely in the
	closed \(60^\circ\) sector bounded by the two symmetry half-planes
	containing its hinge edges.
	
	The six tetrahedra are thus confined to the six consecutive sectors.
	Non-adjacent tetrahedra lie in non-adjacent sectors, whose interiors are
	disjoint. Hence non-adjacent tetrahedra cannot cause blocking, and
	blocking can occur only through contact between adjacent tetrahedra.
\end{proof}

The computer-assisted computations performed with \emph{Mathematica} in the
following results are symbolic rather than numerical. In particular, the
quantifier-elimination steps are performed exactly over the real numbers.
The corresponding code is provided in the supplementary notebook.

\begin{theorem}\label{theorem:noninterference}
	A kaleidocycle composed of three isosceles tetrahedra with edge lengths
	$a$, $b$, and $c$ and their three mirror images is realizable with the
	hinges located along edges of length $a$ if and only if
	
	\begin{equation}\label{eq:noninterference}
		\val(a,b,c):=
		-5 a^6 + 2 a^4 \left(b^2+c^2\right)+11 a^2 \left(b^2-c^2\right)^2-8 \left(b^2-c^2\right)^2 \left(b^2+c^2\right)\geq 0.
	\end{equation}
	
	The inequality can be satisfied only if hinge edge $a$ is strictly shorter than the other two edges.  The case $\val(a,b,c)=0$ corresponds to a kaleidocycle in which,
	in the extremal configurations, adjacent tetrahedra are in face contact. 
\end{theorem}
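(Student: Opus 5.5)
By Lemma~\ref{lemma:nonadjacent}, it suffices to analyse a single pair of adjacent tetrahedra joined along a hinge edge of length $a$. By threefold symmetry (Remark~\ref{rem:symmetric}), it also suffices to treat the hinge angle $\theta$; the $\varphi$-hinges behave identically. The plan is to translate ``the two tetrahedra do not interpenetrate'' into an inequality between the hinge angle and the dihedral angle $\Phi_a$. Then we impose this inequality at the extremal configuration given by Theorem~\ref{theorem:anglerange}, and finally rewrite everything in terms of $a,b,c$.

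\textbf{Step 1 (local geometry at a hinge).} The link of the Bricard linkage carried by a tetrahedron is the common perpendicular of length $d_a$ joining the midpoints of the two hinge edges. By Remark~\ref{rem:bisector}, the bisector plane of the dihedral angle at a hinge edge passes through the midpoint of the opposite edge, so it contains the link. Hence, near the common hinge, each tetrahedron fills a wedge of opening $\Phi_a$ about the hinge axis, and this wedge is symmetric about the half-plane containing its link. The two wedges have disjoint interiors exactly when the angle between the two link half-planes is at least $\Phi_a$.

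\textbf{Step 2 (convert to the hinge angle).} In the convention of Theorem~\ref{theorem:phi_theta_short}, the angle between the two link half-planes is $\pi-|\theta|$: the value $\theta=0$ corresponds to the antiparallel (stretched) position. So the non-interference condition at a hinge is $|\theta|\le \pi-\Phi_a$. Since two tetrahedra sharing an edge can only meet along the faces containing that edge, this local condition is also the global condition for the pair. I will need to check this convention carefully against Figure~\ref{fig:linkagebig}.

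By Theorem~\ref{theorem:anglerange}, along the whole compatibility path we have $|\theta|\le\theta_0$, and $\theta_0$ is attained in the extremal configurations. Therefore the motion is unblocked if and only if $\theta_0\le\pi-\Phi_a$, together with the requirement of Lemma~\ref{lemma:closedpath} that the path be closed. Equivalently, the condition is $\cos\theta_0\ge-\cos\Phi_a$ with $\cos^2\alpha<\tfrac14$. Equality holds exactly when the faces coincide in the extremal configuration, which gives the face-contact statement for $\val(a,b,c)=0$. Sufficiency follows because every configuration on the closed path has $|\theta|,|\varphi|\le\theta_0$, so no contact occurs before the extremal positions.

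\textbf{Step 3 (algebra).} We substitute the following three expressions:
\[
\cos\alpha=\frac{|b^2-c^2|}{a^2},\qquad
\cos\theta_0=\frac{\cos^2\alpha+\tfrac12}{\cos^2\alpha-1},
\]
and $\cos\Phi_a$ from Theorem~\ref{theo:dihedral1} (or from Equation~\eqref{equ:cosphi_xyz}). Clearing the denominators $a^4-(b^2-c^2)^2$ and the denominator of $\cos\Phi_a$ requires tracking their signs. These signs are fixed by the acute-face conditions ($x_0,y_0,z_0>0$) and by $\cos^2\alpha<1$. After clearing, the inequality becomes a polynomial inequality that should factor as a positive factor times $\val(a,b,c)$.

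\textbf{Main obstacle.} The delicate part is not the elimination itself but the side conditions. First, we must show that $\val\ge0$ together with the acute-triangle conditions already forces $\cos^2\alpha<\tfrac14$, so that the closed-path condition of Lemma~\ref{lemma:closedpath} is automatic. Second, we must show that $\val\ge0$ forces $a<b$ and $a<c$. Third, we must confirm that the sign convention in Step 2 is correct. I would settle the first two points by exact quantifier elimination over the reals under the constraints $a,b,c>0$ with $b^2+c^2>a^2$ and its cyclic variants. I would double-check the sign convention of Step 2 on the classical Schatz case with $\alpha=\pi/2$.
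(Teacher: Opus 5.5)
Your proposal is correct and follows essentially the same route as the paper. The steps match: reduce to adjacent pairs via Lemma~\ref{lemma:nonadjacent}; use Remark~\ref{rem:bisector} to get the symmetric bound $\theta_0\le\pi-\Phi_a$ (the paper's $2\theta_0\le 2\pi-2\Phi_a$); convert this to $\cos\theta_0+\cos\Phi_a\ge 0$; substitute $\cos^2\alpha=(b^2-c^2)^2/a^4$ and Equation~\eqref{equ:dihedral}; and settle $\cos^2\alpha<\tfrac14$ and $a<b$, $a<c$ by quantifier elimination. Your wedge argument and the convention that $\theta=0$ is the stretched position make explicit a step the paper only asserts, and the convention checks out: Walker's case $b=c=\tfrac{\sqrt5}{2}a$ gives $\theta_0=120^\circ=\pi-\Phi_a$ and $\val=0$.
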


\begin{proof}
	We prove the equivalence between the clearance condition and realizability
	by establishing both implications.
	
	First, assume that the kaleidocycle is realizable with the hinges located
	along the edge $a$. By Lemma~\ref{lemma:nonadjacent}, blocking can occur
	only through contact between adjacent tetrahedra. It therefore suffices to
	consider a pair of adjacent tetrahedra. Such a pair provides an available
	angular range of $2\pi-2\Phi_a$ for the motion, where $\Phi_a$ denotes the
	dihedral angle of a single tetrahedron at the edge $a$.
	Since the kaleidocycle does not block in any configuration, the angular
	range used during the motion, given by
	Theorem~\ref{theorem:anglerange}, must be less than or equal to the
	available angular range.
	Remark~\ref{rem:bisector} furthermore guarantees that the available
	angular range is the same for negative and positive hinge rotation
	angles. Hence we obtain
	\begin{equation}\label{equ:nonblock}
		2\arccos\left(\frac{\cos^2\alpha+\frac{1}{2}}
		{\cos^2\alpha-1}\right)
		\le
		2\pi-2\Phi_a .
	\end{equation}
	
	Expressing $\Phi_a$ in terms of $a$, $b$, and $c$ according to
	Equation~\eqref{equ:dihedral} yields
	\begin{equation}\label{arccossum}
		\arccos\!\left(
		\frac{\cos^2\alpha+\tfrac{1}{2}}{\cos^2\alpha-1}
		\right)
		+
		\arccos(k)
		\le \pi,
	\end{equation}
	where
	\begin{equation}\label{eq:k}
		k\coloneqq
		\frac{a}{a+b-c}
		+\frac{a}{a-b+c}
		+\frac{a}{a+b+c}
		+\frac{a}{a-b-c}
		-1 .
	\end{equation}
	
	For $x,y\in[-1,1]$ we have
	\[
	\arccos x+\arccos y\leq\pi
	\quad\Leftrightarrow\quad
	x+y\geq0,
	\]
	and~\eqref{arccossum} therefore simplifies to
	\[
	\frac{\cos^2\alpha+\frac{1}{2}}
	{\cos^2\alpha-1}+k\geq0.
	\]
	
	Since $\cos^2\alpha-1<0$, we obtain
	\begin{equation}\label{eq:alpha_k}
		\cos^2\alpha+\frac{1}{2}
		+k(\cos^2\alpha-1)\leq0.
	\end{equation}
	
	Moreover, for the edge $a$ we have, by
	Equation~\eqref{equ:skew_angle_a},
	\begin{equation}\label{eq:cossum}
		\cos^2\alpha=\frac{(b^2-c^2)^2}{a^4}.
	\end{equation}
	
	Substituting~\eqref{eq:k} and~\eqref{eq:cossum}
	into~\eqref{eq:alpha_k} and simplifying yields
	\begin{equation}\label{equ:finalequ}
		5a^6+8(b^2-c^2)^2(b^2+c^2)
		\le
		11a^2(b^2-c^2)^2
		+2a^4(b^2+c^2).
	\end{equation}
	In the symbolic simplification, the edge lengths are specified as positive
	and the triangular faces as acute, as required by the geometry of the
	disphenoid. The computation is documented in the supplementary
	\emph{Mathematica} notebook.
	
	This inequality is equivalent to~\eqref{eq:noninterference}.
	
	Conversely, assume that the clearance
	condition~\eqref{eq:noninterference} holds. Equivalently,
	Inequality~\eqref{equ:finalequ} holds.
	We first show that the skew angle $\alpha$ of the tetrahedron,
	defined in Equation~\eqref{equ:skew_angles}, lies in the interval
	$\alpha\in(\pi/3,\pi/2]$. This is equivalent to
	\begin{equation*}
		\frac{\lvert b^2-c^2\rvert}{a^2}<\frac{1}{2}.
	\end{equation*}
	
	We argue by contraposition. It suffices to show that there are no
	positive real numbers $a,b,c$ such that
	\[
	\val(a,b,c)\geq0
	\quad\text{and}\quad
	\frac{|b^2-c^2|}{a^2}\geq\frac{1}{2}.
	\]
	Quantifier elimination over the real numbers shows that this system
	has no solution; see the supplementary \emph{Mathematica} notebook.
	
	By the geometric relation between $\alpha$ and the edge parameters
	(Equation~\eqref{equ:skew_angles}), this implies that
	$\alpha\in(\frac{\pi}{3},\frac{\pi}{2}]$. According to
	Lemma~\ref{lemma:closedpath}, the compatibility path of the
	kaleidocycle is therefore closed, which is necessary for realizability.
	
	Since Inequality~\eqref{equ:finalequ} was obtained from
	Inequality~\eqref{equ:nonblock} by a sequence of equivalence
	transformations, the two are equivalent. Since
	Inequality~\eqref{equ:finalequ} holds, Inequality~\eqref{equ:nonblock}
	also holds, guaranteeing that the motion along the compatibility path
	remains unblocked. Hence the kaleidocycle is realizable.
	
	It remains to show that the hinge edge $a$ is strictly shorter than
	the other two edges $b$ and $c$ whenever~\eqref{eq:noninterference}
	holds. Again we argue by contraposition. It suffices to show that
	there are no positive real numbers $a,b,c$ such that
	\[
	\val(a,b,c)\geq0
	\quad\text{and}\quad
	(a\geq b\;\text{or}\;a\geq c).
	\]
	Quantifier elimination over the real numbers shows that this system
	has no solution; see the supplementary \emph{Mathematica} notebook.
	Consequently,
	\[
	\val(a,b,c)\geq0
	\quad\Rightarrow\quad
	a<b\;\text{and}\;a<c.
	\]
\end{proof}

\begin{figure}
	\centering
	
	\begin{minipage}[t]{0.48\textwidth}
		\centering
		\includegraphics[width=\linewidth]{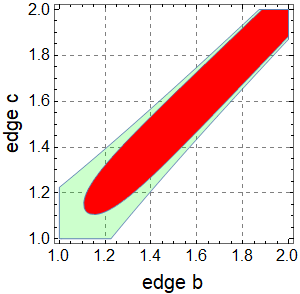}
		
		\vspace{4pt}
		(a) Allowed region of Theorem~\ref{theorem:anglerange}.
		The green region corresponds to
		\(\pi/3<\alpha<2\pi/3\), see~\eqref{equ:alpharange},
		while the red region indicates the subset satisfying the clearance
		condition for realizability.
	\end{minipage}
	\hfill
	\begin{minipage}[t]{0.48\textwidth}
		\centering
		\includegraphics[width=\linewidth]{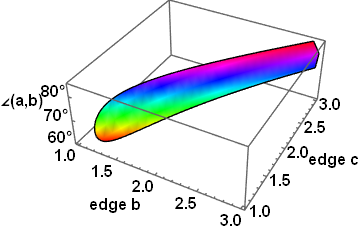}
		
		\vspace{4pt}
		(b) In the admissible region we always have $\angle(a,b)\geq60^\circ$. 
		The same holds for $\angle(a,c)$.
	\end{minipage}
	
	\caption{Kaleidocycles with hinge length $a=1$.}
	\label{fig:a=1}
	
\end{figure}

Theorem~\ref{theo:dihedral2} allows a remarkably concise reformulation of
Theorem~\ref{theorem:noninterference} when the defining parameters of the
disphenoid are taken to be the length $a$ of the two hinge edges, their
distance $d$, and their skew angle $\alpha$.

\begin{theorem}\label{theorem:noninterference2}
A kaleidocycle composed of three isosceles tetrahedra with hinge edge length
$a$, skew angle $\alpha$, and hinge distance $d$, together with their
three mirror images, is realizable if and only if
	\begin{align}\label{eq:noninterference2}
	0 < a \le 2d\sqrt{\frac{1}{3}-\cot^2\alpha}.
	\end{align}
	 Equality corresponds to a kaleidocycle in which,
	in the extremal configurations, adjacent tetrahedra are in face contact.
\end{theorem}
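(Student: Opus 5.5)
The plan is to reuse the chain of equivalences from the proof of Theorem~\ref{theorem:noninterference}, but to substitute Theorem~\ref{theo:dihedral2} for Theorem~\ref{theo:dihedral1} when expressing the dihedral angle.

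\textbf{Reduction to a trigonometric inequality.} By Lemma~\ref{lemma:nonadjacent}, Remark~\ref{rem:bisector} and Theorem~\ref{theorem:anglerange}, realizability (given a closed compatibility path) is equivalent to Inequality~\eqref{equ:nonblock}. Through the $\arccos$-sum step, this is in turn equivalent to Inequality~\eqref{eq:alpha_k}, with $k=\cos\Phi_a$. Now, instead of \eqref{eq:k}, I insert \eqref{equ:dihedral2}:
\[
k=\frac{4d^2-a^2\sin^2\alpha}{4d^2+a^2\sin^2\alpha}.
\]
Write $c^2=\cos^2\alpha$ and $s^2=\sin^2\alpha=1-c^2$. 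Then \eqref{eq:alpha_k} reads $c^2+\tfrac12\le k\,s^2$.

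\textbf{Algebra.} Since $4d^2+a^2s^2>0$, I multiply through by it without changing the direction of the inequality. Collecting the $d^2$ terms gives $4d^2(c^2+\tfrac12-s^2)=4d^2(2c^2-\tfrac12)$. Collecting the $a^2$ terms gives $a^2s^2(c^2+\tfrac12+s^2)=\tfrac32a^2s^2$. The inequality therefore becomes
\[
\tfrac32 a^2 s^2\le 4d^2\bigl(\tfrac12-2c^2\bigr),
\qquad\text{i.e.}\qquad
a^2\le 4d^2\,\frac{1-4c^2}{3s^2}.
\]
Finally, $\frac{1-4c^2}{3s^2}=\frac{s^2-3c^2}{3s^2}=\tfrac13-\cot^2\alpha$. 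Taking square roots, which is legitimate because $a,d>0$, yields \eqref{eq:noninterference2}. Every step above is reversible, so the two inequalities are equivalent.

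\textbf{Closedness of the path.} This is the only point needing care, and it is the part the converse direction must supply. Suppose \eqref{eq:noninterference2} holds with $a>0$. Then $\tfrac13-\cot^2\alpha>0$, i.e.\ $\cot^2\alpha<\tfrac13$, i.e.\ $\sin^2\alpha>\tfrac34$. This is exactly $\pi/3<\alpha<2\pi/3$, so Lemma~\ref{lemma:closedpath} gives a closed compatibility path. In this parametrization no quantifier elimination is needed, unlike in Theorem~\ref{theorem:noninterference}. Conversely, realizability forces a closed path and hence $\cot^2\alpha<\tfrac13$, so the square root in \eqref{eq:noninterference2} is real and the derivation above applies.

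\textbf{Equality case.} Equality in \eqref{eq:noninterference2} corresponds to equality in \eqref{equ:nonblock}: the extremal hinge angle exactly uses up the available range $2\pi-2\Phi_a$. Hence adjacent tetrahedra are in face contact in the extremal configurations, just as in Theorem~\ref{theorem:noninterference}.
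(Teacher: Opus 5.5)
Your proof is correct and follows the paper's route. You substitute Theorem~\ref{theo:dihedral2} into \eqref{equ:nonblock}, reduce the $\arccos$-sum to a rational inequality and solve for $a$; for the converse you read off $\cot^2\alpha<\tfrac13$, i.e.\ $\pi/3<\alpha<2\pi/3$, and invoke Lemma~\ref{lemma:closedpath}. The one difference is that you solve for $a$ by hand (your reduction to $\tfrac32a^2\sin^2\alpha\le 4d^2(\tfrac12-2\cos^2\alpha)$ is right), whereas the paper delegates this step to a computer-assisted symbolic reduction, so your version is fully self-contained.
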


\begin{proof}
	We prove the equivalence between Inequality~\eqref{eq:noninterference2}
	and realizability by establishing both implications.
	
	First, assume that the kaleidocycle is realizable with the hinges located
	along the edge $a$. We build on Equation~\eqref{equ:nonblock} from the
	proof of Theorem~\ref{theorem:noninterference}. Using the expression for
	$\Phi_a$ given in Theorem~\ref{theo:dihedral2}, we obtain
	\begin{equation}\label{equ:anglesum}
		\arccos\left(
		\frac{\cos^2\alpha+\frac{1}{2}}
		{\cos^2\alpha-1}
		\right)
		+
		\arccos\left(
		\frac{4-\left(\frac{a}{d}\sin\alpha\right)^2}
		{4+\left(\frac{a}{d}\sin\alpha\right)^2}
		\right)
		\le \pi.
	\end{equation}
	
	For $x,y\in[-1,1]$, we have
	\[
	\arccos x+\arccos y\le\pi
	\quad\Longleftrightarrow\quad
	x+y\ge0,
	\]
	so Equation~\eqref{equ:anglesum} reduces to
	\[
	\frac{\cos^2\alpha+\frac{1}{2}}
	{\cos^2\alpha-1}
	+
	\frac{4-\left(\frac{a}{d}\sin\alpha\right)^2}
	{4+\left(\frac{a}{d}\sin\alpha\right)^2}
	\ge0.
	\]
	
Although the resulting condition has a simple form, solving this
inequality for $a$ is algebraically involved. A computer-assisted exact
symbolic reduction for $a>0$, $d>0$, and $0<\alpha<\pi$ yields
\begin{equation}\label{equ:cscformula}
	\frac{\pi}{3}<\alpha<\frac{2\pi}{3}
	\;\land\;
	0<a\le
	2d\sqrt{\frac{4}{3}-\csc^2\alpha}.
\end{equation}
	The corresponding computation is documented in the supplementary
	\emph{Mathematica} notebook.
	
	Finally, since
	\[
	\frac{4}{3}-\csc^2\alpha
	=
	\frac{1}{3}-\cot^2\alpha,
	\]
	we obtain
	\[
	0<a\le
	2d\sqrt{\frac{1}{3}-\cot^2\alpha},
	\]
	as claimed.
	
	Conversely, assume that Inequality~\eqref{eq:noninterference2} holds.
	The expression under the square root must be positive, which restricts
	$\alpha$ to the range
	\[
	\frac{\pi}{3}<\alpha<\frac{2\pi}{3}.
	\]
	It follows that Inequality~\eqref{equ:cscformula} holds. By
	Lemma~\ref{lemma:closedpath}, the compatibility path of the
	kaleidocycle is closed.
	
	Since Inequality~\eqref{equ:cscformula} was obtained from
	Inequality~\eqref{equ:anglesum} by a sequence of equivalence
	transformations, the two are equivalent. Hence
	Inequality~\eqref{equ:anglesum} holds, guaranteeing that the
	motion along the compatibility path remains unblocked. Therefore,
	the kaleidocycle is realizable.
\end{proof}

\begin{remark}
Theorem~\ref{theorem:noninterference2} is formulated entirely in terms of
parameters that are also defined for the pure linkage shown in
Figure~\ref{fig:linkagebig}. A hinge length outside the admissible range would
cause the hinges to intersect, provided each link bisects its hinge at right
angles.
\end{remark}

We now examine extremal geometric properties of realizable kaleidocycles. Since the classical kaleidocycle of Walker~\cite{schattschneider1977} plays a distinguished role, we begin with a formal definition.

\begin{definition}\label{def:walker}
	The \emph{classical kaleidocycle of Walker}~\cite{schattschneider1977}
	is the kaleidocycle composed of six congruent isosceles tetrahedra with
	hinge edge length \(a\) and non-hinge edge lengths
	\[
	b=c=\frac{\sqrt5}{2}\,a.
	\]
\end{definition}

Without loss of generality we set $a=1$.
Figure~\ref{fig:a=1}(a) motivates the following result.

\begin{corollary}\label{cor:smallestb=c}
	Let the faces of the tetrahedra be isosceles triangles with $b=c$.
	For hinge length $a=1$, the minimal possible values of $b$ and $c$ for a
	realizable kaleidocycle are
	\[
	b=c=\frac{\sqrt{5}}{2}\approx 1.11803.
	\]
	In this case, adjacent tetrahedra are in face contact in the extremal
	configurations. The corresponding kaleidocycle is, by
	Definition~\ref{def:walker}, the classical kaleidocycle of Walker.
\end{corollary}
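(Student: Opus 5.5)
Set $a=1$ and $b=c$ in the clearance condition of Theorem~\ref{theorem:noninterference}. Every term containing $(b^2-c^2)$ vanishes, so
\[
\val(1,b,b)=-5+2\cdot 2b^2=4b^2-5 .
\]
Realizability therefore holds if and only if $4b^2-5\ge 0$, that is, $b\ge\sqrt5/2$. We must also check that this candidate is an admissible disphenoid. With $b=c$ the faces have sides $1,b,b$, and such a triangle is acute exactly when $1<2b^2$. For $b=\sqrt5/2$ we have $2b^2=5/2>1$, so the faces are acute. The side condition $a<b$ also holds, in agreement with the last assertion of Theorem~\ref{theorem:noninterference}.

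The quantities~\eqref{equ:xyz} are then well defined and $\Phi_a$ is a genuine dihedral angle. Hence $b=c=\sqrt5/2$ is both attainable and minimal, since every smaller $b=c$ gives $\val<0$ and is not realizable.

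For the face-contact claim, the minimal value gives $\val(1,b,b)=0$. By the final sentence of Theorem~\ref{theorem:noninterference}, this is exactly the case in which adjacent tetrahedra are in face contact in the extremal configurations.

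As a cross-check, equality is forced in~\eqref{equ:nonblock}. With $b=c$ we get $\cos\alpha=0$, so $\alpha=\pi/2$ and the extremal hinge angle is $\arccos(-1/2)=2\pi/3$. Equality then requires $\Phi_a=\pi/3$. This is consistent with the introduction's statement that Walker's kaleidocycle has dihedral angle $60^\circ$ at the hinge.

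One can confirm $\Phi_a=\pi/3$ through Theorem~\ref{theo:dihedral2}. Here $d^2=(2b^2-1)/2=3/4$, so
\[
\cos\Phi_a=\frac{3-1}{3+1}=\frac12 .
\]
Finally, $b=c=\tfrac{\sqrt5}{2}a$ matches Definition~\ref{def:walker}, which identifies the kaleidocycle as Walker's.

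None of these steps is a real obstacle; the computation is immediate. The only point that needs care is checking that the boundary value still satisfies the disphenoid's acuteness hypotheses, because the equivalence in Theorem~\ref{theorem:noninterference} was derived under those hypotheses.
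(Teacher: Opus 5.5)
Your proof is correct and follows the paper's own route: substitute $a=1$, $b=c$ into the clearance condition to get $4b^2-5\ge 0$, then use the equality case of Theorem~\ref{theorem:noninterference} and Definition~\ref{def:walker}. Your acuteness check and the cross-check $\Phi_a=\pi/3$ via Theorem~\ref{theo:dihedral2} are sound additions that the paper's short proof leaves implicit.
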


\begin{proof}
	Assuming $b=c$, we obtain
	\[
	\val(1,b,b)=4b^2-5\ge 0,
	\]
	which implies $b\ge \frac{\sqrt{5}}{2}$. The minimal value
	$b=\frac{\sqrt{5}}{2}$ yields the classical kaleidocycle of Walker by Definition~\ref{def:walker}.
\end{proof}

The global minimal value for $b$ is slightly smaller.
\begin{corollary}\label{cor:smallest}
For hinge length $a=1$ the edge length $b$ of a realizable kaleidocycle is bounded below. In particular, there exists a global minimum
\begin{equation}\label{equ:bmin}
b_{\min} =\frac{1}{4\sqrt{3}}\sqrt{\sqrt[3]{2 \left(171 \sqrt{57}+751\right)}-\frac{82 \sqrt[3]{4}}{\sqrt[3]{171 \sqrt{57}+751}}+53} \approx 1.10609 
\end{equation}
such that no realizable configuration exists for $b<b_{min}$.
In the case $b=b_{\min}$, adjacent tetrahedra are in face contact in the extremal configurations.
By symmetry, an analogous statement holds with $b$ replaced by $c$.
\end{corollary}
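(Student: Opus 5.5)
The plan is to reduce the statement to a one-dimensional optimization on the boundary $\val(1,b,c)=0$ and then locate the minimum by a tangency (double-root) condition. I set $a=1$ and pass to squared lengths $u=b^2$, $v=c^2$, writing $v=u+t$ with $t=c^2-b^2$. Since $\val$ depends only on $b^2+c^2=2u+t$ and $(b^2-c^2)^2=t^2$, Inequality~\eqref{eq:noninterference} becomes
\[
f(u,t):=-5+4u+2t+(11-16u)\,t^2-8t^3\ \ge 0 .
\]
By Theorem~\ref{theorem:noninterference}, a realizable kaleidocycle with $b^2=u$ exists if and only if $f(u,t)\ge0$ for some admissible $t$. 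Admissible here means positive edge lengths and acute faces.

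First, I will show that feasibility is monotone in $u$. The proof of Theorem~\ref{theorem:noninterference} already established, by quantifier elimination, that $\val\ge0$ forces $|b^2-c^2|<\tfrac12$ when $a=1$, i.e.\ $|t|<\tfrac12$. On that strip,
\[
\partial f/\partial u = 4-16t^2>0 .
\]
Hence if $f(u_0,t)\ge 0$, then $f(u,t)\ge0$ for every $u\ge u_0$ with the same $t$. I will also check that increasing $u$ with $t$ fixed keeps the faces acute; if needed, I will adjust $t$ slightly or appeal to the quantifier elimination to confirm this. So the feasible $u$ form an interval $[u_{\min},\infty)$, and in particular $b$ is bounded below.

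Second, I will characterize $u_{\min}$. At $u=u_{\min}$, the function $t\mapsto f(u_{\min},t)$ has maximum exactly $0$ over the admissible $t$. Either the maximum occurs at an interior critical point, which gives the system $f=0$, $\partial f/\partial t=0$, or it occurs at an endpoint of the admissible $t$-range. To exclude the endpoint case, I will check that at the boundary of the acute-face region, or at $|t|=\tfrac12$, the value of $f$ is negative for $u$ near the candidate. The interior system reads
\[
-5+4u+2t+(11-16u)t^2-8t^3=0,\qquad 2+2(11-16u)t-24t^2=0 .
\]
The second equation is linear in $u$, so I can solve it for $u$ in terms of $t$ and substitute into the first. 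Equivalently, I can take the resultant in $t$. Either way I obtain a polynomial equation in $u$ alone, expected to factor as a cubic times spurious factors.

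Third, I will select the correct real root of this cubic. It should be the root near $u\approx1.2234$, and I will confirm numerically that the corresponding $t$ is nonzero, so the optimum has $b\ne c$ and is strictly below Walker's value $5/4$ from Corollary~\ref{cor:smallestb=c}. I will also confirm that $|t|<\tfrac12$ and that the faces are acute. Solving the cubic by Cardano's formula and taking $b_{\min}=\sqrt{u_{\min}}$ should reproduce \eqref{equ:bmin}, since the displayed expression has the form $\sqrt{(\text{Cardano root}+53)/48}$. The exact symbolic verification would be done as in the supplementary notebook.

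Finally, because $\val(1,b_{\min},c)=0$ at the optimum, the equality case of Theorem~\ref{theorem:noninterference} gives face contact in the extremal configurations. Since $\val$ is symmetric in $b$ and $c$, the same statement holds with $b$ replaced by $c$. I expect the main obstacle to be the second and third steps: ruling out endpoint maxima rigorously, and correctly identifying the relevant root of the resultant among spurious or complex ones. For both I would first try a direct exact check with quantifier elimination, namely that there is no admissible $(b,c)$ with $\val\ge0$ and $b^2<u_{\min}$.
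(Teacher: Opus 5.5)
Your proposal is correct, and its computational core is the same as the paper's proof. The paper also finds $b_{\min}$ by eliminating $c$ from the boundary equation $\val(1,b,c)=0$ under a double-root condition, and it reads off face contact from the equality case of Theorem~\ref{theorem:noninterference}.

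The genuine difference is how you certify that $b_{\min}$ is a lower bound. The paper runs a separate quantifier elimination showing that $1<b<b_{\min}$, $c>1$, $\val(1,b,c)\ge0$ has no solution. You instead use the monotonicity $\partial f/\partial u=4-16t^2>0$ on the strip $|t|<\tfrac12$. That strip bound is already guaranteed by the quantifier elimination in the proof of Theorem~\ref{theorem:noninterference}. Increasing $u$ at fixed $t$ also preserves acuteness, since with $a=1$ acuteness reads $|t|<1$ and $2u+t>1$. Your route is more transparent, and it explains the paper's remark that there are two admissible values of $c$ for $b>b_{\min}$. It also makes the final quantifier elimination unnecessary. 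Once you verify $\max_{|t|<1}f(u^*,t)=0$, any feasible $(u,t)$ with $u<u^*$ would give $f(u^*,t)>f(u,t)\ge0$, a contradiction. So you do not need a separate compactness argument for the existence of $u_{\min}$.

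The obstacles you anticipate are mild:
\begin{itemize}
\item Eliminating $u$ from $f=\partial f/\partial t=0$ gives $(t-1)(16t^3+16t^2+8t-1)=0$.
\item The factor $t-1$ yields only the spurious value $u=0$.
\item The cubic factor is strictly increasing, since its derivative $8(6t^2+4t+1)$ is positive. Its unique real root is $t^*\approx0.10205$, which gives $u^*=(1+11t^*-12t^{*2})/(16t^*)\approx1.22343$.
\item At $u^*$, the other critical point $t\approx-0.8166$ is a local minimum. The endpoint values $f(u^*,1)=-12u^*$ and $f(u^*,-1)=12-12u^*$ are negative. Hence the maximum over $|t|<1$ is exactly $f(u^*,t^*)=0$, attained only at $t^*$.
\item Finally, $w=48u^*-53$ satisfies $w^3+492w-3004=0$. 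The Cardano solution of this cubic is exactly the expression in \eqref{equ:bmin}.
\end{itemize}
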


\begin{proof}
	We first determine the smallest value $b_{\min}>1$ for which the boundary
	condition
	\[
	\val(1,b,c)=0
	\]
	admits a solution with $c>1$. Eliminating $c$ from this equation yields an
	algebraic condition on $b$. Solving this condition gives the exact radical
	expression for $b_{\min}$ in Equation~\eqref{equ:bmin}. At $b=b_{\min}$ there
	is a unique admissible value of $c$, whereas for $b>b_{\min}$ there are two;
	see the supplementary \emph{Mathematica} notebook.

	To verify directly that $b_{\min}$ is indeed a lower bound, we consider
	the range $1<b<b_{\min}$. Quantifier elimination over the real numbers
	shows that there are no $b,c$ satisfying
	\[
	1<b<b_{\min},\qquad c>1,\qquad
	\val(1,b,c)\geq0.
	\]
	See the supplementary \emph{Mathematica} notebook for the corresponding
	computation.
	
	Finally, we examine the boundary case $b=b_{\min}$. As noted above, there
	is a unique admissible value of $c$ in this case. For this value,
	\[
	\val(1,b_{\min},c)=0,
	\]
	so adjacent tetrahedra are in face contact in the extremal
	configurations.
\end{proof}

The triangular faces of the tetrahedra cannot be arbitrarily oblique.
Figure~\ref{fig:a=1}(b) indicates that the angle between the hinge edge $a$
and the edge $b$ satisfies $\angle(a,b) \ge 60^\circ$. We now verify this formally.

\begin{corollary}\label{cor:oblique}
	The angles $\angle(a,b)$ and $\angle(a,c)$ between the hinge edge $a$ and the
	adjacent edges $b$ and $c$ of a realizable kaleidocycle are bounded
	below by $60^\circ$. If one of these angles equals $60^\circ$, the skew
	angle~$\alpha$ satisfies
	\begin{equation}\label{equ:quarter}
		\cos\alpha=\frac{1}{4}.
	\end{equation}
	Furthermore, in this extremal case we have $\val(a,b,c)=0$, and the tetrahedra
	are in contact in the extremal configurations.
\end{corollary}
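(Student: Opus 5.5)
The plan is to rewrite the angle condition as a polynomial inequality in the edge lengths, and then show that it is implied by the clearance condition of Theorem~\ref{theorem:noninterference}. Without loss of generality set $a=1$. The angle $\angle(a,b)$ lies in a face triangle with sides $1,b,c$ and is opposite the side $c$. The law of cosines gives
\[
\cos\angle(a,b)=\frac{1+b^2-c^2}{2b},
\]
so $\angle(a,b)\ge 60^\circ$ is equivalent to $c^2\ge b^2-b+1$. By the symmetry $b\leftrightarrow c$ of $\val$, the statement for $\angle(a,c)$ follows in the same way. By Theorem~\ref{theorem:noninterference}, realizability means $\val(1,b,c)\ge 0$ together with $b,c>1$. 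It therefore suffices to show that
\[
\val(1,b,c)\ge 0,\quad b,c>1 \;\Longrightarrow\; c^2\ge b^2-b+1,
\]
and to identify the equality cases.

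To make the structure visible I would substitute $s=b^2+c^2$ and $\delta=c^2-b^2$. Then
\[
\val(1,b,c)=-5+2s+\delta^2(11-8s).
\]
On the boundary $c^2=b^2-b+1$ we have $\delta=1-b$ and $s=2b^2-b+1$. The clearance then becomes the one-variable polynomial
\[
g(b)=4b^2-2b-3+(1-b)^2\,(3+8b-16b^2).
\]
A quick check gives $g(5/4)=\tfrac34-\tfrac34=0$, and I expect $5/4$ to be a double root with $g(b)\le 0$ for all $b>1$. I would prove this by factoring $g(b)=-(4b-5)^2\,h(b)$ and checking that $h>0$ on $b>1$. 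In the equality case $b=5/4$ we get $c^2=\tfrac{25}{16}-\tfrac54+1=\tfrac{21}{16}$, hence $|b^2-c^2|=\tfrac14$. Equation~\eqref{equ:skew_angle_a} then gives $\cos\alpha=\tfrac14$, and $\val=0$ there. Theorem~\ref{theorem:noninterference} then yields face contact in the extremal configurations.

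The main obstacle is passing from the boundary curve to the whole region. One must show that no point with $1<c^2<b^2-b+1$ has nonnegative clearance, that is, that the region $\{\val\ge 0\}$ never crosses the curve $c^2=b^2-b+1$. My first attempt would be a monotonicity argument. For fixed $b$, $\val$ is a cubic in $t=c^2$, and in the relevant range $b,c>1$ (where $s>2$, so $11-8s<0$) it should decrease as $t$ moves below the curve toward $b^2$. If that becomes messy, I would fall back, as the paper does elsewhere, on exact quantifier elimination over the reals. The system to be shown infeasible is
\[
b>1,\quad c>1,\quad \val(1,b,c)\ge0,\quad c^2<b^2-b+1,
\]
together with the analogous check that $\val(1,b,c)\ge 0$ and $c^2=b^2-b+1$ force $(b,c)=(5/4,\sqrt{21}/4)$.
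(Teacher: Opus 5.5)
Your proposal is correct, and it takes a more elementary route than the paper. The paper adjoins $(1+b^2-c^2)/(2b)\ge\frac12$ to $\val(1,b,c)\ge0$ and lets exact quantifier elimination return the unique solution $(b,c)=(5/4,\sqrt{21}/4)$. You instead restrict to the boundary curve $c^2=b^2-b+1$ and then propagate off it.

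Both of your expectations can be confirmed by hand. Expanding gives exactly $g(b)=-b^2(4b-5)^2$, so $h(b)=b^2$. For the step you flag as the main obstacle, write $t=c^2$ and $u=b^2-t$. Then
\[
\frac{\partial}{\partial t}\,\val\bigl(1,b,\sqrt{t}\,\bigr)=2+u\,\bigl(8b^2+24t-22\bigr),
\]
which exceeds $2$ whenever $b>1$ and $1<t<b^2$. Since $b^2-b+1<b^2$ for $b>1$, this gives $\val(1,b,c)<-b^2(4b-5)^2\le0$ throughout $1<c^2<b^2-b+1$, so no quantifier elimination is needed at all.

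Two small corrections. First, moving below the curve decreases $t$, so it moves $t$ away from $b^2$, not toward it. Second, the sign of $11-8s$ alone does not give monotonicity, because the derivative $2+2\delta(11-8s)-8\delta^2$ contains the competing term $-8\delta^2$; the explicit formula above is what settles it.

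Your route yields a computer-free, fully checkable proof. It also explains the equality case as a tangency: the double root at $b=5/4$ is where the boundary curve touches the zero set of the clearance. The paper's route is shorter and uniform with its other corollaries, but it gives no insight into why the bound holds.
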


\begin{proof}
	Without loss of generality, we set \(a=1\). By the law of cosines,
	\[
	\cos\gamma=\frac{1+b^2-c^2}{2b},
	\]
	where \(\gamma=\angle(a,b)\). Hence
	\[
	\gamma\le60^\circ
	\quad\Longleftrightarrow\quad
	\frac{1+b^2-c^2}{2b}\ge\frac12.
	\]
	
	We now combine this condition with the clearance inequality. Symbolic
	reduction over the real numbers shows that the only positive solution is
	\[
	b=\frac{5}{4},\qquad
	c=\frac{\sqrt{21}}{4}.
	\]
	See the supplementary \emph{Mathematica} notebook for the corresponding
	computation.
	
	For these values, we have
	\[
	\val(1,b,c)=0
	\qquad\text{and}\qquad
	\gamma=60^\circ.
	\]
	Substituting into Equation~\eqref{equ:skew_angle_a} yields
	\[
	\cos\alpha=\frac14,
	\]
	establishing~\eqref{equ:quarter}.
	
	Interchanging $b$ and $c$ gives the corresponding statement for
	$\angle(a,c)$.
\end{proof}

\begin{remark}
	If $\angle(a,b)=60^\circ$ or $\angle(a,c)=60^\circ$, then in the extremal
	configurations six triangular faces meet at a common vertex, each with angle
	$60^\circ$. Consequently, the six angles sum to $360^\circ$, and the faces
	together lie in a common plane.
\end{remark}

\begin{corollary}
	For the face angle $\delta$ opposite the hinge edge of a realizable
	kaleidocycle, we have
	\[
	\delta \le \arccos\frac{3}{5}.
	\]
	The unique kaleidocycle with
	\[
	\delta = \arccos\frac{3}{5} \approx 53^\circ
	\]
	is the classical kaleidocycle of Walker.
\end{corollary}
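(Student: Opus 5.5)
We normalize $a=1$ and make the statement a two-variable inequality by substituting symmetric quantities for $b$ and $c$. By the law of cosines, the face angle $\delta$ opposite the hinge edge satisfies
\[
\cos\delta=\frac{b^2+c^2-1}{2bc}.
\]
We introduce $u\coloneqq b^2+c^2$ and $v\coloneqq (b^2-c^2)^2\ge 0$. Then $4b^2c^2=u^2-v$, so $2bc=\sqrt{u^2-v}$, and the clearance function of Theorem~\ref{theorem:noninterference} becomes
\[
\val(1,b,c)=-5+2u+11v-8uv .
\]
Since faces are acute, $\delta\in(0,\pi/2)$. The claim $\delta\le\arccos\frac35$ is therefore equivalent to $\cos\delta\ge\frac35$.

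\textbf{Step 1: a lower bound on $u$.} By Theorem~\ref{theorem:noninterference}, realizability forces $1=a<b$ and $a<c$, hence $u>2$ and in particular $8u-11>0$. The clearance condition $\val\ge0$ reads
\[
2u-5\ \ge\ v\,(8u-11)\ \ge 0 .
\]
Hence $u\ge\frac52$.

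\textbf{Step 2: the target inequality.} Since $u>1$, both sides of $5(u-1)\ge 3\sqrt{u^2-v}$ are positive. Squaring is thus an equivalence, so $\cos\delta\ge\frac35$ holds iff
\[
25(u-1)^2-9(u^2-v)=16u^2-50u+25+9v=(2u-5)(8u-5)+9v\ \ge 0 .
\]
By Step~1, $2u-5\ge0$ and $8u-5>0$, and $v\ge0$, so the left side is nonnegative. This proves $\delta\le\arccos\frac35$ for every realizable kaleidocycle.

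\textbf{Step 3: equality.} Equality in Step~2 requires both $(2u-5)(8u-5)=0$ and $v=0$. This means $u=\frac52$ and $b=c$, so $b=c=\frac{\sqrt5}{2}$. By Definition~\ref{def:walker} this is exactly the classical kaleidocycle of Walker. Conversely, for that kaleidocycle $\cos\delta=\frac{3/2}{5/2}=\frac35$. It is realizable by Corollary~\ref{cor:smallestb=c}, since $\val(1,b,b)=4b^2-5=0$ there.

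The only delicate point is Step~1. It relies on the strict inequalities $b,c>a$ from Theorem~\ref{theorem:noninterference}, which guarantee that $8u-11$ has a definite sign, so that $v\ge0$ can be used to conclude $u\ge\frac52$. Once the problem is phrased in $(u,v)$, no quantifier elimination is needed, because the factorization $(2u-5)(8u-5)$ does all the work.
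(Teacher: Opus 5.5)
Your proof is correct, and it takes a genuinely different route from the paper.

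The paper also normalizes $a=1$ and uses the same law-of-cosines expression for $\cos\delta$. It then hands both parts to computer-assisted quantifier elimination over the reals. It shows that no positive $b,c$ satisfy $\val(1,b,c)\ge0$ together with $\cos\delta<\frac35$. It also shows that the equality system has the unique solution $b=c=\frac{\sqrt5}{2}$.

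Your substitution $u=b^2+c^2$, $v=(b^2-c^2)^2$ makes the clearance function linear in $v$. It turns the target into $(2u-5)(8u-5)+9v\ge0$, which is a short, hand-checkable certificate. It also explains why Walker's kaleidocycle is extremal. Realizability forces $b^2+c^2\ge\frac52 a^2$, which is a stronger by-product. The bound on $\delta$ is attained exactly when this minimum of $u$ is reached with $v=0$.

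The paper's approach buys uniformity: the same machinery handles $b_{\min}$, the $60^\circ$ face-angle bound and the hinge dihedral angle. It pays for this with transparency.

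One caveat concerns your closing remark. Step 1 cites $a<b$ and $a<c$ from Theorem~\ref{theorem:noninterference}, and the paper itself establishes that by quantifier elimination. So your proof is free of quantifier elimination only modulo that citation.

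You can remove the dependency using acuteness of the faces alone:
\begin{itemize}
\item $|b^2-c^2|<1$ and $b^2+c^2>1$ give $0\le v<1$ and $u>1$.
\item If $u\le\frac{11}{8}$, then $\val(1,b,c)\le 2u-5+(11-8u)=6-6u<0$.
\item Hence $u>\frac{11}{8}$, so $8u-11>0$, and your Step 1 goes through unchanged to give $u\ge\frac52$.
\end{itemize}
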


\begin{proof}
	Without loss of generality, we set \(a=1\) and denote
	\(\delta=\angle(b,c)\). By the law of cosines,
	\[
	\cos\delta=\frac{b^2+c^2-1}{2bc}.
	\]
	
	We first show that no realizable kaleidocycle satisfies
	\(\cos\delta<\frac35\). Symbolic reduction over the real numbers
	shows that there are no positive values of $b$ and $c$ satisfying
	\[
	\val(1,b,c)\geq0
	\qquad\text{and}\qquad
	\frac{b^2+c^2-1}{2bc}<\frac35.
	\]
	See the supplementary \emph{Mathematica} notebook for the corresponding
	computation. Therefore
	\[
	\cos\delta\geq\frac35,
	\]
	which implies
	\[
	\delta\leq\arccos\frac35.
	\]
	
	It remains to determine the equality case. Symbolic reduction over
	the real numbers shows that the conditions
	\[
	\val(1,b,c)\geq0
	\qquad\text{and}\qquad
	\frac{b^2+c^2-1}{2bc}=\frac35
	\]
	have the unique positive solution
	\[
	b=c=\frac{\sqrt5}{2}.
	\]
	See the supplementary \emph{Mathematica} notebook for the corresponding
	computation. By Definition~\ref{def:walker}, this is
	the classical kaleidocycle of Walker.
\end{proof}

\begin{corollary}
	For the dihedral angle $\Phi_a$ at the hinge edge of a realizable
	kaleidocycle, we have $\Phi_a \le 60^\circ$. The unique kaleidocycle with
	$\Phi_a = 60^\circ$ is the classical kaleidocycle of Walker.
\end{corollary}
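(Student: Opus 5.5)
Realizability forces Inequality~\eqref{equ:nonblock}, $\arccos\bigl(\frac{\cos^2\alpha+1/2}{\cos^2\alpha-1}\bigr)\le\pi-\Phi_a$. So I will bound $\Phi_a$ from above by a function of $\alpha$ alone and then maximize over the admissible twist angles.

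Set $u=\cos^2\alpha\in[0,\tfrac14)$; the range comes from $\alpha\in(\pi/3,\pi/2]$, established in the proof of Theorem~\ref{theorem:noninterference}. Since $\arccos x+\arccos y\le\pi$ iff $x+y\ge0$, the blocking condition is equivalent to
\[
\cos\Phi_a\;\ge\;\frac{u+\tfrac12}{1-u}.
\]
The right-hand side is strictly increasing in $u$. Its value at $u=0$ is $\tfrac12$, which is where $\alpha=\pi/2$. Hence $\cos\Phi_a\ge\tfrac12$ for every realizable kaleidocycle, that is, $\Phi_a\le60^\circ$.

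For the equality case, suppose $\Phi_a=60^\circ$. Then $\tfrac12\ge\frac{u+1/2}{1-u}$, which forces $u=0$, i.e.\ $b=c$ by Equation~\eqref{eq:cossum}, and also forces equality in the clearance condition, $\val(a,b,c)=0$. With $b=c$ and $a=1$ we have $\val(1,b,b)=4b^2-5$, as in Corollary~\ref{cor:smallestb=c}. So $b=c=\sqrt5/2$, which is Walker's kaleidocycle by Definition~\ref{def:walker}.

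It remains to check that Walker's kaleidocycle really has $\Phi_a=60^\circ$. I will use Theorem~\ref{theo:dihedral2}. Here $d_a^2=(-1+\tfrac54+\tfrac54)/2=\tfrac34$ and $\sin\alpha=1$, so
\[
\cos\Phi_a=\frac{3-1}{3+1}=\tfrac12 .
\]
The same value also follows from Equation~\eqref{equ:dihedral}.

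The main point needing care is the monotonicity of $(u+\tfrac12)/(1-u)$ on $[0,\tfrac14)$. Its derivative is $\tfrac32/(1-u)^2>0$, so this is easy. One must also make sure that equality in the bound forces both $u=0$ and face contact at the same time. That holds because the chain $\cos\Phi_a\ge f(u)\ge f(0)=\tfrac12$ is tight only if both inequalities are equalities.
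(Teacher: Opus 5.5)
Your proof is correct, and it takes a genuinely different route from the paper.

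The paper settles both claims by quantifier elimination in the edge lengths. Its notebook shows that $\val(1,b,c)\ge0$ together with $\Phi_a>60^\circ$ has no positive solution. It also shows that $\val(1,b,c)\ge0$ together with $\Phi_a=60^\circ$ forces $b=c=\sqrt5/2$.

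You never pass to the clearance polynomial for the bound. You stay in the angular form \eqref{equ:nonblock}, which gives $\cos\Phi_a\ge(1+2\cos^2\alpha)/(2\sin^2\alpha)$ for each twist angle. Monotonicity then yields $\Phi_a\le60^\circ$, and the equality case reduces to the one-variable equation $4b^2-5=0$.

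Your version can be checked by hand, and it explains the result. The extremal hinge angle $\arccos\bigl(-(1+2\cos^2\alpha)/(2\sin^2\alpha)\bigr)$ is never below $120^\circ$, and equals it only for perpendicular hinges. So at most $60^\circ$ is left for $\Phi_a$, which is why the extremal disphenoid has $b=c$. The paper's method is opaque but uniform with the neighbouring face-angle corollaries, which do not reduce to the single parameter $\alpha$ this way.

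Two small refinements:
\begin{itemize}
\item Take $\cos^2\alpha<\tfrac14$ from Lemma~\ref{lemma:closedpath} (a realizable kaleidocycle needs a closed path), not from the quantifier-elimination step in the proof of Theorem~\ref{theorem:noninterference}. Your argument then uses no computer algebra at all.
\item Your step ``equality forces $\val=0$'' is valid because the reduction from \eqref{equ:nonblock} to the clearance polynomial only multiplies by nonzero factors. You can also bypass it: for $b=c$ one has $\sin\alpha=1$, and Theorem~\ref{theo:dihedral2} gives $\cos\Phi_a=\tfrac12$ iff $4d_a^2=3a^2$ iff $b^2=\tfrac54a^2$.
\end{itemize}
Walker's kaleidocycle is itself realizable because $\val=0$, as in Corollary~\ref{cor:smallestb=c}; this settles existence in the uniqueness claim.
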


\begin{proof}
	Without loss of generality, we set $a=1$. The dihedral angle
	$\Phi_a$ at the hinge edge is given by
	Equation~\eqref{equ:dihedral}.
	
	We first show that no realizable kaleidocycle satisfies
	$\Phi_a>60^\circ$. Symbolic reduction over the real numbers shows that
	there are no positive values of $b$ and $c$ satisfying
	\[
	\val(1,b,c)\geq0
	\qquad\text{and}\qquad
	\Phi_a>60^\circ.
	\]
	See the supplementary \emph{Mathematica} notebook for the corresponding
	computation. Therefore
	\[
	\Phi_a\leq60^\circ.
	\]
	
	It remains to determine the equality case. Symbolic reduction over
	the real numbers shows that the conditions
	\[
	\val(1,b,c)\geq0
	\qquad\text{and}\qquad
	\Phi_a=60^\circ
	\]
	have the unique positive solution
	\[
	b=c=\frac{\sqrt5}{2}.
	\]
	See the supplementary \emph{Mathematica} notebook for the corresponding
	computation. By Definition~\ref{def:walker}, this is
	the classical kaleidocycle of Walker.
\end{proof}

\begin{remark}
	The preceding corollaries highlight three extremal properties of the
	classical kaleidocycle of Walker. Among realizable kaleidocycles with
	$b=c$, it has the smallest possible non-hinge edge length. Moreover,
	among all realizable kaleidocycles, it uniquely attains both the
	maximal face angle opposite the hinge edge and the maximal hinge
	dihedral angle.
\end{remark}

\section{The Kaleidocyclic Dodecahedron}\label{sec:doecahedron}
In this section, we investigate how several kaleidocycles can be assembled
into a larger body. Experiments with physical models suggested a particular
nested arrangement of two congruent kaleidocycles, one in a closed
configuration and the other in an open configuration (see
Figures~\ref{fig:assembly}(a) and~\ref{fig:assembly}(b)). Arranging four
such nested pairs suggested a polyhedral structure with $48$ triangular
faces.

Motivated by this observation, we investigate whether there exists an
isosceles tetrahedron for which eight congruent kaleidocycles, consisting
in total of $48$ tetrahedra, \emph{fit together perfectly}. More precisely,
we mean that every tetrahedron is in face contact with exactly two other
tetrahedra --- either from the same kaleidocycle or from a different one ---
and that these three tetrahedra share a common hinge edge.

\begin{lemma}
	If four pairs of nested kaleidocycles fit together perfectly to form a polyhedron,
	then the dihedral angle of each disphenoid at a hinge edge must be $45^\circ$.
\end{lemma}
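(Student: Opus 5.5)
The argument is a local angle count around a single hinge edge, based on the definition of fitting together perfectly. In that definition every tetrahedron is in face contact with exactly two other tetrahedra, and these three tetrahedra share a common hinge edge. The plan is to show that, around each hinge edge of the polyhedron, the tetrahedra incident to that edge close up a full turn of $2\pi$ with no gaps. Since every disphenoid has the same dihedral angle $\Phi_a$ at its hinge edges, we get $n\,\Phi_a = 2\pi$, where $n$ is the number of tetrahedra around that edge. The lemma then follows once we show $n=8$.

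First, I would count the tetrahedra around a hinge line $\ell$. Within a kaleidocycle, the hinge $\ell$ is shared by exactly two adjacent tetrahedra, and these span the angle $2\Phi_a$. In the nested arrangement, the axis of the closed kaleidocycle and the axis of the open kaleidocycle coincide with a common line. The plan is to use the threefold and mirror symmetry of both cycles (Lemma~\ref{lemma:nonadjacent}: each hinge lies in a symmetry plane) to show that hinge edges of the two nested cycles coincide along $\ell$. This requires the cycles to be rotated relative to each other, so that their symmetry half-planes match up.

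Next, I would argue that in a perfect fit each hinge line of the assembled polyhedron collects hinge edges from several kaleidocycles. A "fold" pair from one cycle sits face-to-face against a pair from the neighbouring cycle. Each face contact between tetrahedra from different cycles consumes one face on each side, so the pairs at $\ell$ chain together cyclically until they close. In the assembly, $\ell$ is surrounded by pairs from four kaleidocycles, that is, $4\cdot 2=8$ tetrahedra. I would justify the number four from the global structure: eight kaleidocycles, $48$ tetrahedra, and each tetrahedron has $2$ hinge edges, giving $96$ tetrahedron--hinge incidences. If the number of distinct hinge lines is $12$, as in an octahedrally symmetric arrangement where the hinge lines correspond to the edges of the underlying cuboctahedral/cube–octahedron frame, then $96/12=8$ tetrahedra meet per line.

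Finally, there are no gaps: consecutive tetrahedra around $\ell$ are in face contact, and the chain returns to its start. So the dihedral angles sum to exactly $2\pi$, which gives $8\Phi_a=2\pi$ and hence $\Phi_a=\pi/4=45^\circ$.

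The main obstacle is the middle step: rigorously pinning down the number of tetrahedra around a common hinge line, or equivalently the number of distinct hinge lines, from the combinatorial definition of perfect fit alone. I would first attempt the double count above, combining it with the requirement that each tetrahedron has exactly two face contacts. A cyclic chain around a line must then have all its faces in contact, so the count around each line is uniform by symmetry of the assembly. If that proves insufficient, I would fall back on using the nested-pair geometry explicitly, where each nested pair contributes exactly two tetrahedra at each shared hinge line.
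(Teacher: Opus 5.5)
\textbf{There is a gap: the count $n=8$ is not established.} Your closing step ($n\Phi_a=2\pi$ around a hinge edge closed up by face contacts) is the same as in the paper. The problem is how you get $n=8$.

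Your double count $96/12=8$ rests on two premises that are false for this assembly. The first is that there are $12$ distinct hinge lines. The second is that every hinge line carries the same number of tetrahedra, ``uniform by symmetry''.

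The perfect-fit definition itself already rules out uniformity. Each tetrahedron has two hinge edges, but its two face contacts occur around only \emph{one} of them. So the two hinge edges of a tetrahedron play different roles.

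In the polyhedron the six hinges of a kaleidocycle split into its two symmetry classes.
\begin{itemize}
\item Three hinges are the interior edges it shares with its nested partner. They lie on the fourfold half-axes, at the six type-1 vertices, and eight disphenoids meet at each.
\item The other three are surface edges of the polyhedron. Each belongs only to the two tetrahedra of that one kaleidocycle, and these two are not even in face contact there.
\end{itemize}
The true incidence count is therefore $6\cdot 8+24\cdot 2=96$ over $30$ hinge edges. Your formula gives the right number for the wrong reason. Your opening plan, to show that the tetrahedra close up a full turn around \emph{each} hinge edge, would fail.

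Your fallback also miscounts. A nested pair contributes four tetrahedra at each of its shared edges, two from each kaleidocycle, not two.

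\textbf{What is missing} is the incidence structure of the nested pairs, which the paper uses directly:
\begin{itemize}
\item Two nested kaleidocycles share exactly three hinge edges, each carrying two tetrahedra from each of them.
\item In the assembled polyhedron, any two of the four nested pairs share exactly one such edge.
\end{itemize}
Hence exactly $4+4=8$ disphenoids meet at each of these six edges. There the perfect-fit condition gives a gap-free ring, so $8\Phi_a=360^\circ$.

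You only need one such edge. But you must locate it correctly and count its tetrahedra through the nested pairs, not through a global average over all hinge edges.
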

\begin{proof}
	Up to symmetry, there is a unique way to nest two kaleidocycles, as shown in
	Figures~\ref{fig:assembly}(a) and~\ref{fig:assembly}(b). When the two
	kaleidocycles are nested, they share three hinge edges, each of which belongs
	to two tetrahedra from each kaleidocycle. In the resulting polyhedron, any two
	nested pairs share exactly one edge of this type. At such an edge, eight
	disphenoids meet, and each is in face contact with its two neighboring
	disphenoids due to the perfect fitting condition. Since the total angular space
	around the edge is $360^\circ$, each disphenoid contributes a dihedral angle of
	$45^\circ$.
\end{proof}

The shape of a disphenoid is determined by three parameters. If we normalize
the hinge edge length to $a=1$ and fix the dihedral angle at this edge to
$\Phi_a=45^\circ$, only one free parameter remains. This parameter can in
principle be determined using Theorem~\ref{theorem:phi_theta} together with
the relations between the hinge angles of the open and closed configurations
under the perfect fitting condition. Instead, we will determine it below by
a simpler geometric argument.

The construction leads to a polyhedron consisting of eight congruent
kaleidocycles and having octahedral symmetry. It closely resembles the
\emph{disdyakis dodecahedron}, which also has octahedral symmetry and the
same combinatorial structure in terms of vertices, edges, and faces. We
therefore refer to this polyhedron as the \emph{kaleidocyclic dodecahedron}.

As will be shown below, the disphenoid dimensions required for perfect
fitting are uniquely determined. The corresponding kaleidocycle is not
realizable in the sense of Theorem~\ref{theorem:noninterference}, although
the open and closed configurations required to assemble the kaleidocyclic
dodecahedron are attainable.

We call this kaleidocycle \emph{nearly realizable} because, although it
does not satisfy the clearance condition for realizability, a physical
model is nevertheless fully rotatable. The extent of the violation and
the resulting apparent discrepancy will be quantified below.

\begin{figure}
	\centering
	
	\begin{minipage}[t]{0.32\textwidth}
		\centering
		\includegraphics[width=\linewidth]{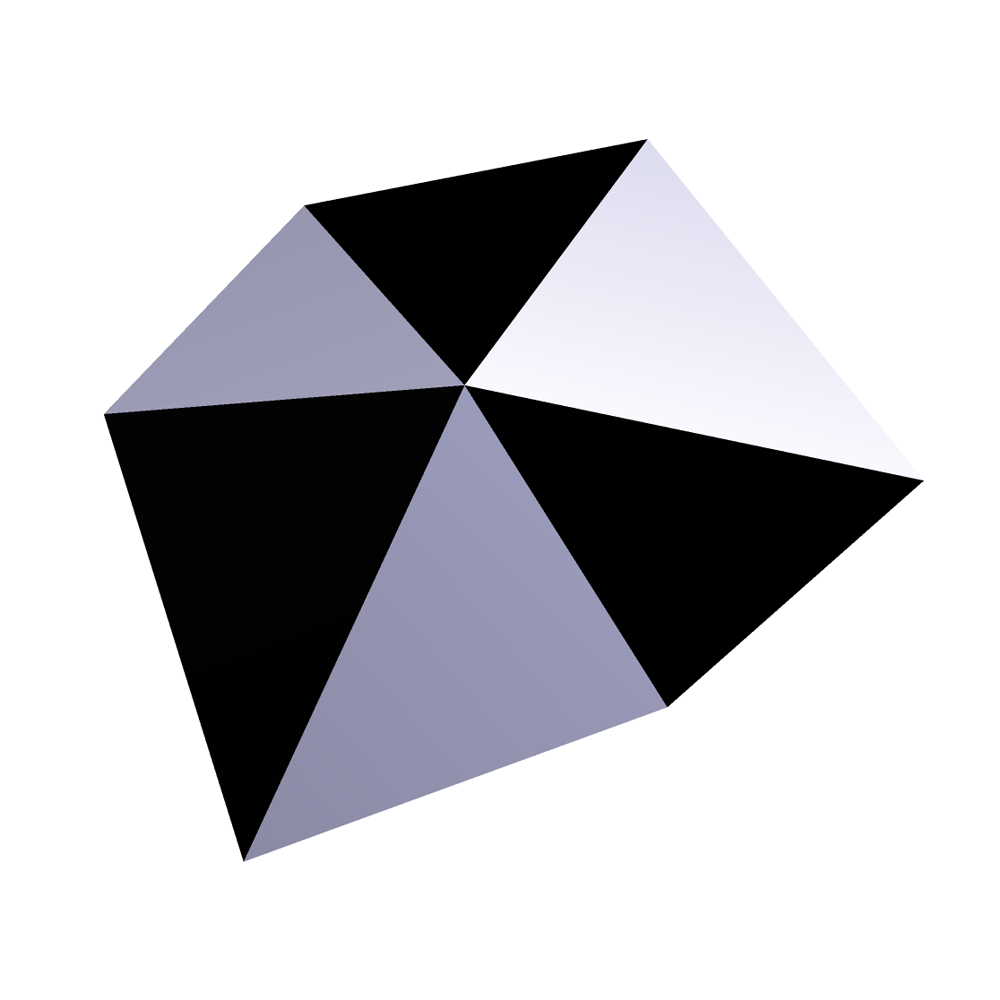}
		
		\vspace{4pt}
		(a) Closed configuration.
	\end{minipage}
	\hfill
	\begin{minipage}[t]{0.32\textwidth}
		\centering
		\includegraphics[width=\linewidth]{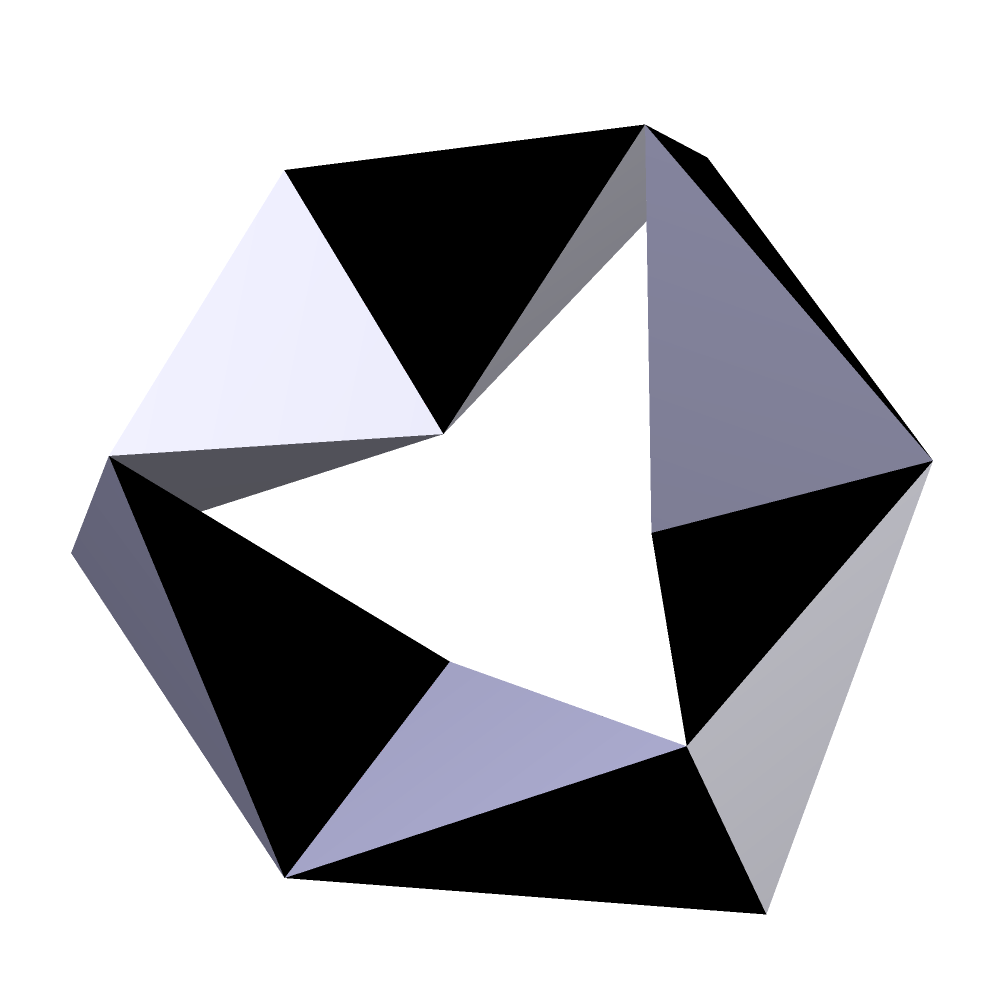}
		
		\vspace{4pt}
		(b) Open configuration.
	\end{minipage}
	\hfill
	\begin{minipage}[t]{0.32\textwidth}
		\centering
		\includegraphics[width=\linewidth]{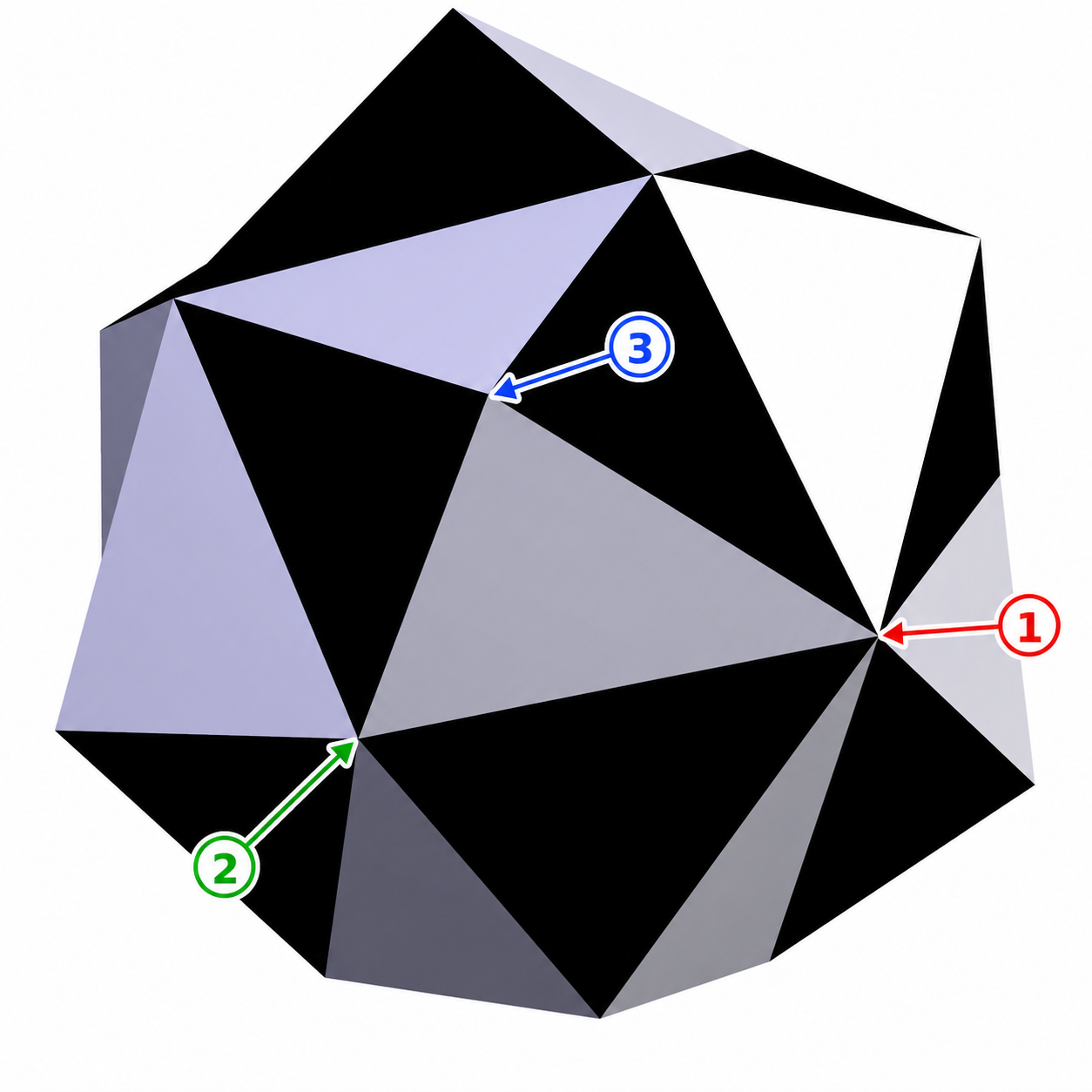}
		
		\vspace{4pt}
		(c) The kaleidocyclic dodecahedron.
	\end{minipage}
	
	\caption{Assembly of the kaleidocyclic dodecahedron.}
	\label{fig:assembly}
	
\end{figure}
The following derivation of the disphenoid dimensions was suggested by a
physical model of the kaleidocyclic dodecahedron. By temporarily disregarding
the hinge connections and considering only the symmetries of the polyhedron,
the dimensions can be obtained directly from its octahedral symmetry. This
approach also provides insight into the initially surprising fact that the
kaleidocyclic dodecahedron can be assembled not only from four pairs of
kaleidocycles in the open and closed configurations, but also from eight
kaleidocycles in the closed configuration (see
Figure~\ref{fig:differentways}).

On the right-hand side of Figure~\ref{fig:assembly}(c), we show a vertex of
type~1 of the kaleidocyclic dodecahedron, at which eight isosceles tetrahedra
meet along their (invisible) hinge edges, which lie on a fourfold symmetry axis.

On the left-hand side of Figure~\ref{fig:assembly}(c), we show a vertex of
type~2, where six disphenoids meet. Not all neighboring disphenoids are in
face-to-face contact; only those sharing a common vertex of type~1 are. Vertices of type~2 lie on threefold symmetry axes.

At a vertex of type~3, four disphenoids meet, forming two pairs of adjacent
tetrahedra in face contact, each pair sharing a common vertex of type~1.
Vertices of type~3 lie on twofold symmetry axes of the kaleidocyclic
dodecahedron.

The kaleidocyclic dodecahedron has the full symmetry group $O_h$ of the
octahedron, that is, all $24$ rotational and $24$ reflectional symmetries
of a regular octahedron (or equivalently, a cube).

Of the four vertices of the disphenoid shown in
Figure~\ref{fig:posvectors}, the three visible vertices lie on axes of
two-, three-, and fourfold symmetry, respectively, while the fourth
(invisible) vertex also lies on the fourfold symmetry axis. Without loss
of generality, we assume that the edge along the fourfold symmetry axis
has length $1$.

\begin{figure}
	\centering
	
	\begin{minipage}{0.45\textwidth}
		\includegraphics[width=\linewidth]{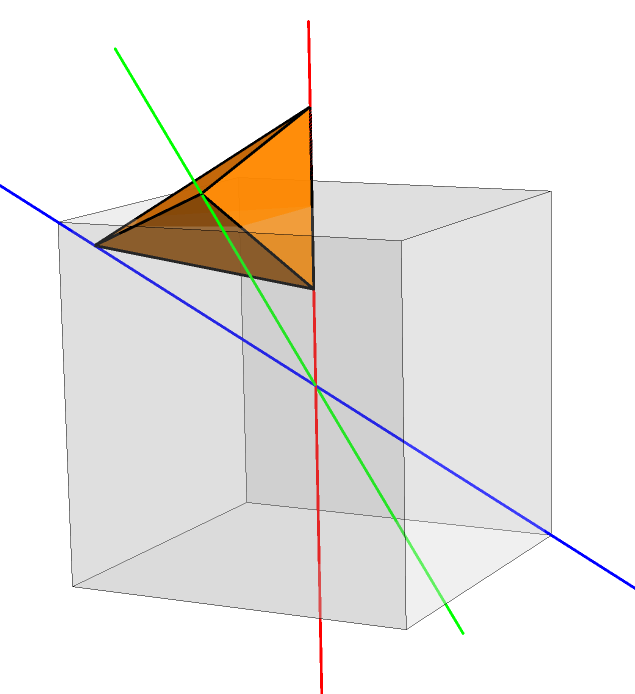}
	\end{minipage}\hfill
	\begin{minipage}{0.5\textwidth}
		
With cube vertices $(\pm1,\pm1,\pm1)$, the position vectors are:
\[
\begin{aligned}
	\mathbf{v}_1 &= x(0,0,1), \quad \text{fourfold axis} \\
	\mathbf{v}_2 &= (x+1)(0,0,1), \quad \text{fourfold axis} \\
	\mathbf{v}_3 &= y(0,1,1), \quad \text{twofold axis} \\
	\mathbf{v}_4 &= z(1,1,1), \quad \text{threefold axis}
\end{aligned}
\]
		The unknowns $x$, $y$, and $z$ are determined by the disphenoid edge conditions:
		\[
		\begin{aligned}
			\|\mathbf{v}_1 - \mathbf{v}_2\|
			&=
			\|\mathbf{v}_3 - \mathbf{v}_4\|, \\
			\|\mathbf{v}_1 - \mathbf{v}_3\|
			&=
			\|\mathbf{v}_2 - \mathbf{v}_4\|, \\
			\|\mathbf{v}_1 - \mathbf{v}_4\|
			&=
			\|\mathbf{v}_2 - \mathbf{v}_3\|.
		\end{aligned}
		\]
		
	\end{minipage}
	
	\caption{Position vectors of the disphenoid vertices relative to a cube.}
	\label{fig:posvectors}
\end{figure}

Under the natural action of the octahedral symmetry group \(O_h\), a single disphenoid generates 48 congruent copies, which together form the kaleidocyclic dodecahedron; the hinges lie on the edges of length~$1$.

Solving the system of equations for $x$, $y$, and $z$ determines the
remaining edge lengths $b$ and $c$. The symbolic computation is documented
in the supplementary \emph{Mathematica} notebook. We obtain
\begin{equation}
\begin{aligned}\label{equ:kalei_length}
	a &= 1,\\
	b &= \sqrt{\frac{1}{34} \Bigl( 15 \sqrt{2} - \sqrt{85 - 34 \sqrt{2}} + 39 \Bigr)} \approx 1.26185,\\
	c &= \sqrt{\frac{1}{34} \Bigl( 15 \sqrt{2} + \sqrt{85 - 34 \sqrt{2}} + 39 \Bigr)} \approx 1.39631.
\end{aligned}
\end{equation}

When the kaleidocyclic dodecahedron is viewed as an assembly of disphenoids,
it becomes natural that it admits two distinct constructions from eight
kaleidocycles (see Figure~\ref{fig:differentways}). These arise from two
different ways of pairing the disphenoids along the invisible hinge edges at
the vertices of type~1.

\begin{figure}
	\centering
	
	\begin{minipage}[t]{0.32\textwidth}
		\centering
		\includegraphics[width=\linewidth]{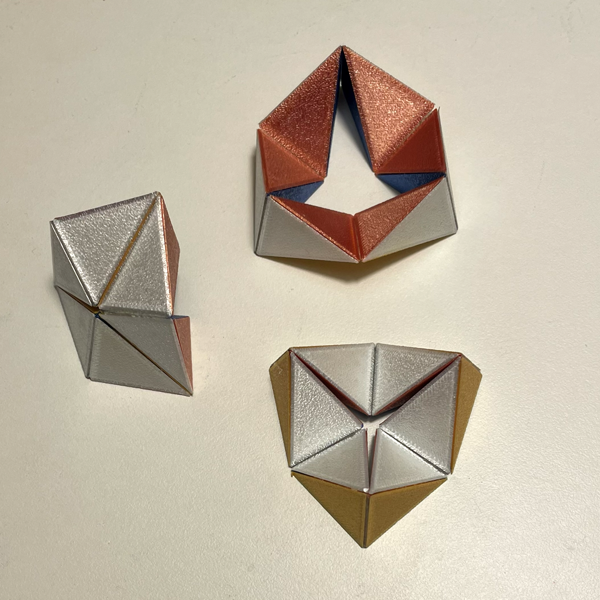}
		
		\vspace{4pt}
		(a) Some configurations of the same kaleidocycle.
	\end{minipage}
	\hfill
	\begin{minipage}[t]{0.32\textwidth}
		\centering
		\includegraphics[width=\linewidth]{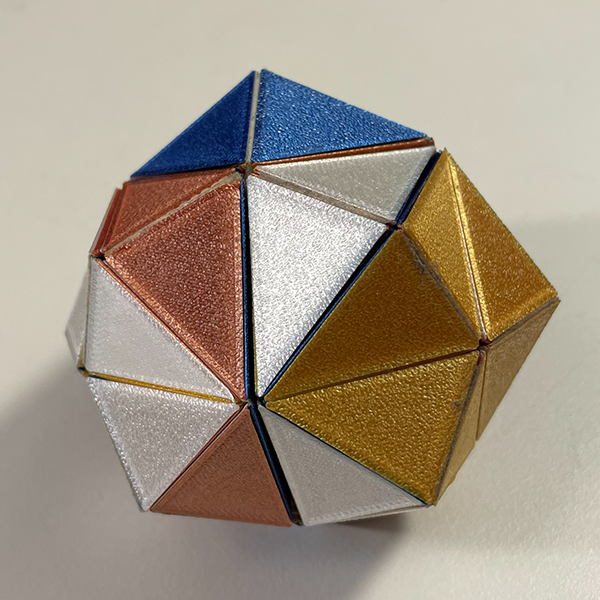}
		
		\vspace{4pt}
		(b) Assembly from four open and four closed kaleidocycles.
	\end{minipage}
	\hfill
	\begin{minipage}[t]{0.32\textwidth}
		\centering
		\includegraphics[width=\linewidth]{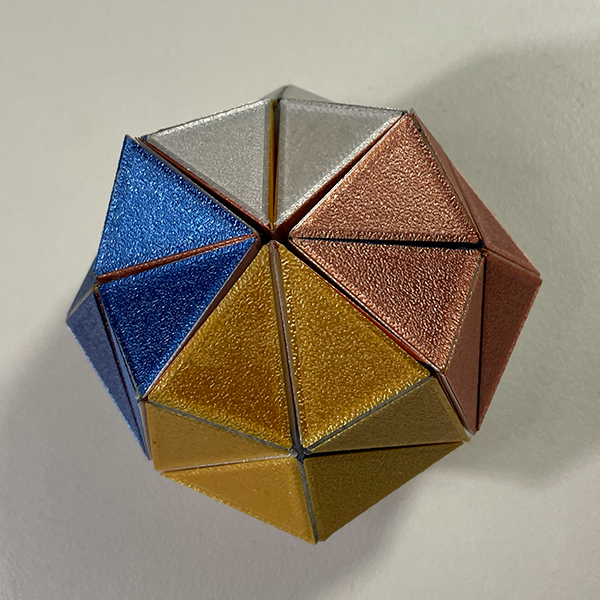}
		
		\vspace{4pt}
		(c) Assembly from eight closed kaleidocycles.
	\end{minipage}
	
	\caption{Physical realization of the kaleidocyclic dodecahedron and two different ways to assemble it.}
	\label{fig:differentways}
	
\end{figure}
Checking the realizability of the kaleidocycle using Theorem~\ref{theorem:noninterference} with the edge lengths from~\eqref{equ:kalei_length}, we obtain
\[
\val(a,b,c) \approx -0.130539 < 0,
\]
which formally implies that adjacent disphenoids intersect during the motion. Nevertheless, physical models (see Figure~\ref{fig:differentways}) exhibit smooth and continuous rotation without noticeable obstruction. To understand this apparent contradiction, we examine the extremal configurations of the mechanism.

The corresponding skew angle $\alpha$ follows from~\eqref{equ:skew_angles}. Using the edge lengths from~\eqref{equ:kalei_length} and standard trigonometric identities, we obtain the equivalent representation
\[
\alpha = \arctan\!\left(\sqrt{2\sqrt{2}+4}\right) \approx 69.059^\circ.
\]

By inequality~\eqref{equ:thetainterval}, the hinge rotation angles satisfy approximately
\[
-136.027^\circ \le \theta, \varphi \le 136.027^\circ.
\]

Since the hinge dihedral angle equals $45^\circ$, Remark~\ref{rem:bisector} implies that the closed configurations shown in Figure~\ref{fig:assembly}(a) occur when one of the hinge angles $\theta$ or $\varphi$ reaches the limiting value $\pm135^\circ$.

Consequently, at the four extremal configurations, adjacent tetrahedra
exhibit an angular overlap of only approximately $1^\circ$. The physical
prototype shown in Figure~\ref{fig:differentways} nevertheless permits
smooth and continuous rotation through these configurations. This can be attributed to small tolerances and deformations in the
physical model.

\section{Conclusion}
In this paper, we derived a new compact closure equation for
threefold-symmetric Bricard 6R linkages directly from their threefold
symmetry and established its equivalence to the classical closure equation.
The resulting formulation facilitates the derivation of the fundamental
kinematic relations governing the motion of the corresponding kaleidocycles.
These relations form the basis for the necessary and sufficient realizability
criteria established in this paper, which in turn yield several extremal
geometric properties of realizable kaleidocycles. Finally, we identified a
tetrahedral shape that permits a highly symmetric face-to-face assembly of
eight congruent kaleidocycles and analyzed the continuous motion of
kaleidocycles of this shape.

\enlargethispage{\baselineskip}

\bibliographystyle{cas-model2-names}
\bibliography{references}

\end{document}